\definecolor{hot}{RGB}{65,105,225}
\theoremstyle{plain}
\newtheorem{theorem}{Theorem}[section]
\newtheorem{prop}[theorem]{Proposition}
\newtheorem{cor}[theorem]{Corollary}
\newtheorem{lemma}[theorem]{Lemma}
\theoremstyle{definition}
\newtheorem{defn}[theorem]{Definition}
\newtheorem{que}[theorem]{Question}
\newtheorem{remark}[theorem]{Remark}
\newcommand\sO{{\mathcal O}}
\newcommand\sH{{\mathcal H}}
\newcommand\sP{{\mathcal P}}
\newcommand\sF{{\mathcal F}}
\newcommand\sG{{\mathcal G}}
\newcommand\sM{{\mathcal M}}
\newcommand\sN{{\mathcal N}}
\newcommand\sV{{\mathcal V}}
\newcommand\sL{\mathcal{L}}
\def\R{\mathbb{R}}
\newcommand\Q{{\mathbb{Q}}}
\newcommand\Z{{\mathbb{Z}}}
\newcommand\C{{\mathbb{C}}}
\renewcommand\P{{\mathbb{P}}}
\newcommand{\Art}{S_0}
\newcommand{\pH}{\,^p\sH}
\newcommand{\wt}[1]{\widetilde{#1}}
\newcommand{\ul}[1]{\underline{#1}}
\newcommand{\green}[1]{{\color{olive}#1}}
\DeclareMathOperator{\id}{id} 
\DeclareMathOperator{\Mel}{Mel}
\DeclareMathOperator{\Tors}{Tors}
\DeclareMathOperator{\Perv}{Perv}
\DeclareMathOperator{\rat}{rat}
\DeclareMathOperator{\Hom}{Hom}
\DeclareMathOperator{\Ext}{Ext}
\def\sF{\mathcal{F}}
\title[Alexander modules]{Alexander modules, Mellin transformation and variations of mixed Hodge structures}
\begin{document}

\author{Eva Elduque}
\address{Department of Mathematics, University of Michigan-Ann Arbor, 530 Church St, Ann Arbor, MI 48109, USA.}
\email {elduque@umich.edu}\urladdr{http://www-personal.umich.edu/~elduque}


\author{Mois\'es Herrad\'on Cueto}
\address{Department of Mathematics, Louisiana State University, 303 Lockett Hall, Baton Rouge, LA 70803, USA.}
\email {moises@lsu.edu}\urladdr{http://www.math.lsu.edu/~moises}

\author{Lauren\c{t}iu Maxim}
\address{Department of Mathematics, University of Wisconsin-Madison, 480 Lincoln Drive, Madison WI 53706-1388, USA.}
\email {maxim@math.wisc.edu}\urladdr{https://www.math.wisc.edu/~maxim/}

\author{Botong Wang}
\address{Department of Mathematics, University of Wisconsin-Madison, 480 Lincoln Drive, Madison WI 53706-1388, USA.}
\email {wang@math.wisc.edu}\urladdr{http://www.math.wisc.edu/~wang/}

\renewcommand{\shortauthors}{Elduque, Herrad\'on Cueto, Maxim and Wang}

\keywords{Alexander module, Mellin transformation, mixed Hodge structure, perverse sheaf, mixed Hodge module, unipotent variation of mixed Hodge structures}

\subjclass[2020]{14C30, 14D07, 14F35, 14F45, 32S35, 32S60, 55N30, 58K15}

\date{\today}

\begin{abstract}
To any complex algebraic variety endowed with a morphism to a complex affine torus we associate multivariable cohomological Alexander modules, and define natural mixed Hodge structures on their maximal Artinian submodules. The key ingredients of our construction are Gabber-Loeser's Mellin transformation and Hain-Zucker's work on unipotent variations of mixed Hodge structures. As applications, we prove the quasi-unipotence of monodromy, we obtain upper bounds on the sizes of the Jordan blocks of monodromy, and we explore the change in the Alexander modules after removing fibers of the map. We also give an example of a variety whose Alexander module has non-semisimple torsion.
\end{abstract}

\maketitle


\section{Introduction}




\subsection{Setup}\label{sec:setup} The aim of this note is to investigate Hodge-theoretic aspects of multivariable cohomological Alexander modules associated to complex algebraic varieties endowed with morphisms to complex affine tori. We also provide some geometric applications, and indicate several methods of computation.

Let $T$ be a complex affine torus of dimension $n$, with a universal covering $\pi: \wt T\to T$, i.e., $T \cong (\C^*)^n$ and $\wt T\cong \C^n$. We fix a base point $\tilde{b}\in \wt T$, and let $b=\pi({\tilde{b}})$.   

Let $X$ be a complex algebraic variety, and let $f: X\to T$ be an algebraic map. Consider the fiber product $\widetilde{X}\coloneqq X\times_T \wt T$, with projections $p$ and $\tilde{f}$, as in the following diagram 
\[
\xymatrix{
\widetilde{X}\ar[r]^{\tilde{f}}\ar[d]_{p}&\wt T\ar[d]^{\pi}\\
X\ar[r]^{f}&T.
}
\]
In particular, $p:\widetilde{X} \to X$ is a $\Z^n$-covering map, and $\tilde{f}$ is the pullback of $f$ by $\pi: \wt T\to T$. 

For any $x\in T$, we denote $f^{-1}(x)$ by $F_x$, and for any $\tilde{x}\in \wt T$, we denote $\tilde{f}^{-1}(\tilde{x})$ by $F_{\tilde{x}}$. Note that if $x=\pi(\tilde{x})$, the covering map $p$ induces an isomorphism $F_{\tilde{x}}\cong F_x$.

Let $A=\Q[\pi_1(T)]$ and let
\[
\sL_T
\coloneqq \pi_!\underline{\Q}_{\wt T}
\]
be the tautological local system on $T$, seen as a local system of rank one free $A$-modules. The action of $A$ is defined by letting $\pi_1(T)$ act as deck transformations of $\wt T$ (which we convene is a right action in this paper). 
 If we choose  an isomorphism $\pi_1(T)\cong \Z^n$, then $A\cong \Q[\Z^n]$ is isomorphic to a Laurent polynomial ring in $n$ variables $t_1,\ldots, t_n$. In these notations, the tautological local system $\sL_T$ is defined by letting  the standard generators $\alpha_i$ ($i=1,\ldots,n$) of $\pi_1(T)$ act by multiplication by the corresponding variable $t_i$.
Let 
\begin{equation}\label{eq_p}
\sL_X \coloneqq f^*\sL_T \simeq p_! \underline{\Q}_{\widetilde{X}},
\end{equation}
be the induced local system on $X$
(where the isomorphism follows by proper base change). 

In the above notations, we can now introduce the main object of study in this paper.

\begin{defn}\label{def1}
For any non-negative integer $i$, the $i$-th \emph{cohomological Alexander module} of $X$ with respect to $f$ is the $A$-module $H^i(X, \sL_X)$. 
\end{defn}

The motivation for terminology comes from the fact that the corresponding homology modules $H_i(X,\sL_X)$ can be identified with the multivariable homology Alexander modules $H_i(\widetilde{X},\Q)$ of the pair $(X,f)$, with the module structure induced by the deck group action. Up to replacing $\sL_X$ by its $A$-dual, the cohomological Alexander modules may be related to the homological ones by the Universal Coefficient spectral sequence (see, e.g., \cite[Section 2.3]{DM07}), which in the case $n=1$ simplifies into a short exact sequence, see \cite[Remark 2.3.4]{EGHMW}. We note that the modules $H^*(X,\sL_X)$ are not isomorphic to the cohomology of $\widetilde X$ in general. Indeed, the former are computed as the hypercohomology of $p_!\underline{\Q}_{\widetilde{X}}$, and the latter as the hypercohomology of $p_*\underline{\Q}_{\widetilde{X}}$.

Multivariable (co)homological Alexander modules are typically studied through their support loci, see, e.g., \cite{DM07}, \cite{Li92}, \cite{Bu15}, \cite{LM17}. In this note, we investigate properties of their maximal Artinian submodules.


Recall that for a finitely generated $A$-modules, being Artinian is equivalent to being a finite dimensional $\Q$-vector space, and to having zero-dimensional support in $\operatorname{Spec}(A)$. Every finitely generated $A$-module $M$ has a maximal Artinian submodule, which we denote $\Art M$. For example, if $n=1$, then $A\cong \Q[t^{\pm 1}]$, and $\Art M$ is equal to the torsion submodule of $M$.

In this paper, we study $\Art H^i(X,\sL_X)$ for a complex algebraic variety $X$ (not necessarily smooth) endowed with an algebraic map $f\colon X\to T$ to an $n$-dimensional complex affine torus. We define natural and functorial mixed Hodge structures on the Artinian modules $\Art H^i(X, \sL_X)$. Our construction uses Saito's theory of mixed Hodge modules. The structure we obtain here depends on the choice of a base point in the universal covering $\wt T$ of $T$. However, different base points give rise to (non-canonically) isomorphic mixed Hodge structures on $\Art H^i(X, \sL_X)$. Because of this dependence on a base point, the focus of this paper is on exploring consequences of the construction and existence of the mixed Hodge structure we describe (see Corollaries~\ref{van} and \ref{iso}, Propositions~\ref{prop:removeFibers} and \ref{ss} and the example in Section~\ref{sec:non-ss}), and not so much on the mixed Hodge structure itself.

\begin{remark}[Maximal Artinian submodules of the homology and cohomology Alexander modules]\label{rem:homvscohom}
It should be noted that when $n>1$, there is no relation between $\Art H^j(X,\sL_X)$ and $\Art H_i(X,\sL_X)$, for any $i,j$. This is a consequence of a fact of commutative algebra: (up to replacing $\sL_X$ by its $A$-dual) the cohomology and homology of $\sL_X$ are computed by $A$-dual complexes of $A$-modules. If $n>1$, there is no relation between the maximal Artinian submodule of the cohomology of such a complex and that of its dual.

Examples are readily available: as soon as $n>\dim X$, $\Art H^j(X,\sL_X)=0$ (see Corollary~\ref{van}(b)), yet the homology counterpart need not vanish, and indeed $\Art H_0(X,\sL_X)\cong \Q$ if $\widetilde{X}$ is connected. Thus, the mixed Hodge structure in the present paper does not yield a mixed Hodge structure on (the maximal Artinian submodule of) the homology of $\widetilde{X}$, since this coincides with $\Art H_i(X,\sL_X)$.

On the other hand, if $n=1$, it was proved in \cite[Proposition 2.4.1]{EGHMW} that the maximal Artinian submodules of the homology and, resp., cohomology Alexander modules are canonically dual (up to a shift in degrees).
\end{remark}


\subsection{Statement of results}

Our main result can be formulated as follows:

\begin{theorem}\label{thm1}
Fixing $\wt T$, $\pi$, and a base point $\tilde{b} \in \wt T$, the maximal Artinian submodule $\Art H^i(X, \sL_X)$ has a natural mixed Hodge structure (MHS)  with the following properties.
\begin{enumerate}
\item If $b=\pi({\tilde{b}})$ is a general point of $f$, then there is a natural map 
\[
\iota: \Art H^i(X, \sL_X)\to H^{i-n}\big(F_{b}, \Q\big),
\]
which is an  injective morphism of MHS. 

\item For a general point $x\in T$, $\Art H^i(X, \sL_X)$ is non-canonically isomorphic to a sub-MHS of $H^{i-n}\big(F_x, \Q\big)$. 

\item Given any commutative triangle of algebraic maps
\[
\begin{tikzcd}[row sep = 1em]
X \arrow[rr,"\phi"]\arrow[dr,"f"'] &&
Y \arrow[dl,"g"]\\
& T
\end{tikzcd}
\]
the induced map $\Art H^i(Y, \sL_Y)\to \Art H^i(X, \sL_X)$ is a homomorphism of MHS. 
\item The $\pi_1(T)$-action on $\Art H^i(X, \sL_X)$  is quasi-unipotent.
Moreover, if $\sigma$ is any monodromy action on $\Art H^i(X, \sL_X)$, then
\[
\log\sigma^N: \Art H^i(X, \sL_X)\to \Art H^i(X, \sL_X)(-1)
\]
is a map of MHS, where $N$ is positive integer such that $\sigma^N$ is unipotent, and $(-1)$ denotes the $-1$st Tate twist. 
\item If the $\pi_1(T)$-action on $\Art H^i(X, \sL_X)$ is semi-simple, then the MHS on $\Art H^i(X, \sL_X)$ is independent of the choice of the base point $\tilde{b}$. Otherwise, different base points give rise to MHS that are non-canonically isomorphic. 
\end{enumerate}
Here, we call a point $x \in T$ general (or generic) if it is contained in a Zariski open dense subset of $T$ over which $f$ is a topologically locally trivial fibration.
\end{theorem}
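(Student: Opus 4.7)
The plan is to realize $\Art H^i(X,\sL_X)$ as a sub-MHS of a cohomology group that already carries a natural MHS via Hodge-module theory. Using the projection formula, rewrite
\[
H^i(X,\sL_X) \cong H^i\bigl(T,\, Rf_*\Q_X \otimes_\Q \sL_T\bigr),
\]
so that $H^*(X,\sL_X)$ is the Mellin transform $\Mel(Rf_*\Q_X)$ in the sense of Gabber--Loeser. By their structure theorem, the resulting cohomology $A$-modules are supported on a finite union of torsion-translated subtori of $\spec A$, and the Artinian summand corresponds to the $0$-dimensional components, i.e.\ to a finite set of characters $\chi\in\spec A$.

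Let $U\subset T$ be a Zariski open dense subset over which $f$ is topologically locally trivial, and $j\colon U\hookrightarrow T$ the inclusion. The cofiber of $j_!\, j^*Rf_*\Q_X \to Rf_*\Q_X$ is supported on the proper closed subset $T\setminus U$, and its Mellin transform contributes no Artinian terms beyond what is already visible over $U$; this yields the injection in~(1), with the degree shift by $n$ matching the fact that $R\Gamma(T,\sL_T)$ is concentrated in degree $n$ (so that the trivial summand $R^0f_*\Q_X\supset \Q_U$ already shifts cohomology up by $n$). For the MHS itself, $Rf_*\Q_X$ underlies a pushforward $Rf_*\Q^H_X$ in Saito's derived category of mixed Hodge modules on $T$, whose cohomology sheaves restrict over $U$ to admissible, graded-polarizable VMHS. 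Decomposing by generalized $A$-eigenspaces and twisting each summand by the rank-one local system associated to the corresponding character $\chi$ reduces matters to the \emph{unipotent} case, in which Hain--Zucker's construction endows $H^*(U,-)$ with a natural MHS. Taking the stalk at the base point $\tilde b$ produces both the map $\iota$ of~(1) and the realization of its image as a sub-MHS of $H^{i-n}(F_b,\Q)$, while parallel transport along any path in $U$ from $b$ to a general $x$ gives the non-canonical identification in~(2).

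The remaining properties are largely formal. Item~(3) follows from functoriality of $Rf_*$ in the Hodge-module formalism and of Hain--Zucker's construction. For~(4), the quasi-unipotence of the monodromy on the Artinian part is inherited from that of the local monodromies of admissible VMHS at infinity on $T$; the assertion that $\log\sigma^N$ is a morphism of MHS into the Tate twist $(-1)$ is the classical statement that the monodromy logarithm is of Hodge type $(-1,-1)$ on a limit mixed Hodge structure (Schmid, Steenbrink, Saito). For~(5), changing the base point $\tilde b$ acts on $\Art H^i(X,\sL_X)$ by a monodromy operator, which is trivial on the underlying vector space in the semi-simple case and in general produces only a non-canonical automorphism preserving the Hodge and weight filtrations.

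The main obstacle I anticipate is the construction of the MHS itself: bridging the $A$-module/constructible-sheaf side of the Mellin transformation with Saito's Hodge-module and Hain--Zucker's VMHS side, so that the splitting by character and the subsequent twists by rank-one local systems assemble compatibly into a single, well-defined MHS on the $A$-module $\Art H^i(X,\sL_X)$, independent of the intermediate choices of $U$, eigenspace decomposition, and rank-one twist.
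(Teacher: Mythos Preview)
Your outline has the right ingredients---projection formula, Mellin, Hain--Zucker, stalk at a base point---but the central bridge is missing, and two of your substitute arguments are circular or point in the wrong direction.

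\textbf{The missing step.} The paper does \emph{not} work with the open--closed triangle $j_!j^*\to\id$. After using $t$-exactness of $\Mel$ to reduce to $\sP=\pH^i(Rf_*\Q_X)$, the key fact is
\[
\Mel(\sP_s)=\Art\Mel(\sP),
\]
where $\sP_s$ is the unique \emph{maximal smooth sub-object} of $\sP$ in $\Perv(T)$. This is proved via local duality: for an Artinian $A$-module $N$ with associated local system $L_N$, one has $\Hom_A(N,\Mel(\sP))\cong\Hom_{\Perv(T)}(L_N[n],\sP)$, so Artinian submodules of $\Mel(\sP)$ correspond exactly to smooth sub-perverse sheaves of $\sP$. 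One then shows $\sP_s$ underlies a sub-MHM of $\sM=H^i(Rf_*\Q_X^H)$ (built from the maximal constant sub-object after a finite cover), hence is a shifted admissible VMHS on $T$; the MHS on $\Art H^i(X,\sL_X)$ is its stalk at $b$. Your proposal replaces this with ``the cofiber of $j_!j^*Rf_*\Q_X\to Rf_*\Q_X$ contributes no Artinian terms,'' but that claim is not obvious: to see that a perverse sheaf supported on a proper closed subset has Mellin transform with trivial Artinian part, you already need the identification $\Art\Mel=\Mel((\cdot)_s)$ (together with $\Hom(L[n],\sG)=0$ for $\sG$ supported on $Z\subsetneq T$). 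So the open--closed route does not bypass the step it is meant to replace.

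\textbf{Two further issues.} First, your ``decompose by generalized $A$-eigenspaces and twist by the rank-one local system of each character $\chi$'' presupposes that the characters are torsion (so that the twist is an object of MHM). That is precisely the quasi-unipotence in~(4), which in the paper is a \emph{consequence} of $\sP_s$ being a sub-MHM, not an input. Assuming it here is circular. Second, for the assertion that $\log\sigma^N$ is a $(-1,-1)$-morphism, you invoke limit MHS at infinity (Schmid/Steenbrink/Saito). That is the wrong geometry: here $\sigma\in\pi_1(T)$ acts on the stalk of an admissible (quasi-)unipotent VMHS on $T$ itself, and the relevant input is Hain--Zucker's theorem that the monodromy representation of such a VMHS is a mixed Hodge representation of $\Q[\pi_1(T,b)]/J^{r+1}$, so $\log\sigma^N$ lands in the Tate twist. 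Finally, for~(5), a change of $\tilde b$ acts by parallel transport in $\pi^*\sV$ on $\wt T$, which is a MHS isomorphism but generally not given by a monodromy operator of $\sV$; your last sentence conflates the two.
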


\begin{remark}
Note that property (1) of Theorem \ref{thm1} may fail if the genericity assumption on $b$ is dropped. For example, let $X=T\setminus \{b\}$ and let $f:X \to T$ be the inclusion map. Then $\Art H^n(X, \sL_X)\cong \Q$, but $F_b=\emptyset$. 
\end{remark}

Here are some consequences of Theorem \ref{thm1}. 
\begin{cor}\label{van}\label{cor_dominant}
Let $d$ be the dimension of a general fiber $F_x$ of $f$. 
\begin{enumerate}[label=(\alph*)]
\item For all $i<n$ and $i>n+2d$, we have $\Art H^i(X, \sL_X)\cong 0$.
\item If $f$ is not dominant, or more generally, if for two generic points $x, y\in T$, $H^{i-n}(F_x, \Q)$ and $H^{i-n}(F_y, \Q)$ do not share a nontrivial common sub-MHS, then $$\Art H^i(X, \sL_X)=0.$$
\item For any $\sigma \in \pi_1(T)$, let $N$ be a positive integer such that the action of $\sigma^N$ on $\Art H^i(X, \sL_X)$ is unipotent. Then the nilpotence index of $\sigma^N-1$ is at most $1+\min\{i-n, 2d-i+n\}$. In other words, after field extension to $\bar{\Q}$, every Jordan block of the action $\sigma$ has size at most $1+\min\{i-n, 2d-i+n\}$. If $X$ is smooth, or more generally a general fiber $F_x$ is smooth, then nilpotence index of $\sigma^N-1$ is at most 
$\min\left\{\left\lceil\frac{i-n+1}{2}\right\rceil, d-\left\lfloor\frac{i-n-1}{2}\right\rfloor\right\}$.
\item If $F_x$ is a smooth complete algebraic variety, or equivalently the map $f$ is generically smooth and proper, then the MHS on $\Art H^i(X, \sL_X)$ is pure of weight $i-n$. 
\end{enumerate}
\end{cor}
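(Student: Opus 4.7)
The plan is to deduce all four parts from Theorem~\ref{thm1}, using the MHS injection $\Art H^i(X, \sL_X) \hookrightarrow H^{i-n}(F_x, \Q)$ from part (2) to transfer information about $\Art H^i(X, \sL_X)$ to classical weight bounds on the cohomology of the generic fiber $F_x$.

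For (a), a general fiber $F_x$ has complex dimension $d$, so $H^{i-n}(F_x, \Q) = 0$ whenever $i - n < 0$ or $i - n > 2d$, and the vanishing transfers to $\Art H^i(X, \sL_X)$ via Theorem~\ref{thm1}(2). For (b), non-dominance forces $F_x = \emptyset$ for a generic $x$; the more general statement follows because Theorem~\ref{thm1}(2), applied to two generic points $x$ and $y$, realizes $\Art H^i(X, \sL_X)$ as a sub-MHS of both $H^{i-n}(F_x, \Q)$ and $H^{i-n}(F_y, \Q)$, so the absence of a common nontrivial sub-MHS forces it to vanish. Part (d) is similar in spirit: if $F_x$ is smooth and complete, then $H^{i-n}(F_x, \Q)$ is pure of weight $i - n$ by classical Hodge theory, and any sub-MHS of a pure MHS is itself pure of the same weight.

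For (c), I would combine Theorem~\ref{thm1}(4) with Deligne's weight estimates. The key interpretive step is that $\log \sigma^N: M \to M(-1)$ being a MHS morphism, where $M := \Art H^i(X, \sL_X)$, is equivalent to $\log \sigma^N$ sending each $W_w M$ into $W_{w-2} M$; iterating, $(\log \sigma^N)^k$ sends $W_w M$ into $W_{w - 2k} M$, which vanishes once $2k > w_+ - w_-$, where $[w_-, w_+]$ is the weight interval of $M$. By Theorem~\ref{thm1}(2), $[w_-, w_+]$ sits inside the weight interval of $H^{i-n}(F_x, \Q)$, which by Deligne is contained in $[\max(0, 2(i-n) - 2d), \min(2(i-n), 2d)]$ in general; a short case split ($i - n \leq d$ vs.\ $i - n > d$) then yields $w_+ - w_- \leq 2\min(i-n, 2d - i + n)$, giving the claimed nilpotence bound for $\sigma^N-1$. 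When $F_x$ is smooth, the improved Deligne estimate $[w_-, w_+] \subseteq [i - n, \min(2(i-n), 2d)]$ halves the length, so the nilpotence bound becomes $1 + \lfloor \min(i-n, 2d - i + n)/2 \rfloor$, which via routine floor/ceiling identities matches the stated formula.

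The argument is essentially a bookkeeping exercise once Theorem~\ref{thm1} and Deligne's weight bounds are in hand; the only mildly delicate point is the integer arithmetic in the smooth subcase of (c), and the only place the monodromy statement (4) is used in an essential way is part (c).
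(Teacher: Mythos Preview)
Your proposal is correct and follows essentially the same approach as the paper: parts (a), (b), (d) are deduced from the MHS embedding of Theorem~\ref{thm1}(2) into $H^{i-n}(F_x,\Q)$, and part (c) from Theorem~\ref{thm1}(4) combined with Deligne's weight bounds $[\max(0,2(i-n)-2d),\min(2(i-n),2d)]$ (and the sharper lower bound $i-n$ in the smooth case). The paper's proof is just a terser version of what you wrote; your explicit verification of the floor/ceiling identity in the smooth subcase is a helpful addition.
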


It should be noted that in Corollary~\ref{van}(c) we obtain the same bound that was obtained in \cite[Corollary 7.4.2]{EGHMW} in the case where $T$ is one dimensional and $X$ is smooth.

As a corollary of the construction of the map $\iota$ from Theorem \ref{thm1}(1), we also get the following.

\begin{cor}\label{iso}
If the algebraic map $f\colon X \to T$ is a topologically locally trivial fibration, then the cohomological Alexander modules $H^i(X, \sL_X)$ are Artinian, and the map $\iota$ is an isomorphism of MHS.
\end{cor}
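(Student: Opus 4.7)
The plan is to compute $H^i(X, \sL_X)$ directly using the Leray spectral sequence for $f\colon X\to T$ and to show each such cohomology group is already a finite-dimensional $\Q$-vector space, so that the Artinian claim is immediate and Theorem~\ref{thm1}(1) can be upgraded from an injection to an isomorphism by a dimension count.

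Since $f$ is a topologically locally trivial fibration, each $\sH^q := R^q f_* \underline{\Q}_X$ is a $\Q$-local system on $T$ with stalk $H^q(F_b, \Q)$, and the projection formula applied to $\sL_X = f^*\sL_T$ gives
\[
R^q f_* \sL_X \;\cong\; \sL_T \otimes_{\underline{\Q}_T} \sH^q,
\]
a local system of free $A$-modules. The key computation is of $E_2^{p,q} = H^p(T, \sL_T \otimes \sH^q)$. Identifying sheaf cohomology with group cohomology, this equals $H^p(\Z^n, A \otimes_\Q V^q)$ with diagonal $\Z^n$-action, where $V^q = H^q(F_b, \Q)$ carries the monodromy representation. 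The $\Z^n$-equivariant $\Q$-linear isomorphism
\[
\phi\colon A \otimes V^q \longrightarrow A \otimes V^q, \qquad g \otimes v \longmapsto g \otimes (g^{-1}\cdot v),
\]
intertwines the diagonal action with the action that is trivial on $V^q$. Combined with the Koszul-complex computation $H^*(\Z^n, A) = \Q$ (concentrated in degree $n$, because $(t_1-1, \ldots, t_n-1)$ is a regular sequence in $A$), one concludes that $E_2^{p,q}$ vanishes whenever $p \neq n$, and that $\dim_\Q E_2^{n,q} = \dim_\Q H^q(F_b, \Q)$.

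Because only the column $p=n$ carries nonzero terms, the spectral sequence degenerates at $E_2$, yielding $\dim_\Q H^i(X, \sL_X) = \dim_\Q H^{i-n}(F_b, \Q)$. In particular $H^i(X, \sL_X)$ is a finite-dimensional $\Q$-vector space, hence Artinian as an $A$-module, so $\Art H^i(X, \sL_X) = H^i(X, \sL_X)$. Now Theorem~\ref{thm1}(1) supplies an injective morphism of MHS $\iota\colon \Art H^i(X, \sL_X) \to H^{i-n}(F_b, \Q)$; since its source and target have the same finite $\Q$-dimension, $\iota$ must be an isomorphism of MHS.

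The main subtlety is that $\phi$ is not $A$-linear: the two $A$-module structures on $A\otimes V^q$ (diagonal versus action only on $A$) differ, so the natural $A$-module structure on $E_2^{n,q}$ coming from $\sL_T$ need not coincide with the tautological one on $V^q$. The proof circumvents this by only using $\Q$-dimensions for the Leray computation, deferring all Hodge-theoretic and $A$-module content to Theorem~\ref{thm1}(1), where the matching dimensions force the injective MHS morphism $\iota$ to be an isomorphism.
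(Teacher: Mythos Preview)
Your proof is correct and takes a different route from the paper's. The paper argues structurally: since $f$ is a fibration, $Rf_*\underline{\Q}_X$ is locally constant, so each perverse cohomology sheaf $\sP=\,^p\sH^i(Rf_*\underline{\Q}_X)\cong (R^{i-n}f_*\underline{\Q}_X)[n]$ is already smooth, i.e.\ $\sP_s=\sP$; tracing the construction of $\iota$ in the proof of Theorem~\ref{thm1}(1), the inclusion $L|_U\hookrightarrow L^\dagger_U$ is then the identity (with $U=T$), so $\iota$ is an isomorphism, and the Artinian claim follows from $\sP_s=\sP$ via Proposition~\ref{prop_s0}. Your approach sidesteps the perverse/Mellin infrastructure: you compute the Leray $E_2$ directly via a shearing isomorphism in group cohomology (essentially re-deriving Lemma~\ref{lemma_ls} for each $\sH^q$), then dimension-count against the black-box injection from Theorem~\ref{thm1}(1). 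The paper's route is shorter given the machinery already in place and shows directly that $\iota$ is the stalk of an identity map of local systems; yours is more elementary and independent of the maximal-smooth-sub-object discussion in Section~\ref{sec:smooth}. One minor point worth making explicit: since $f$ is a fibration over all of $T$, every point of $T$ is ``general'' in the sense of Theorem~\ref{thm1}, so the hypothesis of part~(1) is automatically satisfied for $b=\pi(\tilde b)$.
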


We also explore the change in the Alexander modules after removing fibers of the map $f$. The following result is useful for explicit computations of Alexander modules (see, e.g., Section \ref{sec:non-ss}). Its proof does not involve mixed Hodge structures. 

\begin{prop}\label{prop:removeFibers}
Let $X$ be an algebraic variety with an algebraic map $f\colon X\to T$. Let $Z$ be a proper closed subset of $T$, let $U=T\setminus Z$ and $Y=f^{-1}(U)\xhookrightarrow{\tilde\jmath} X$. Let $\sL_{Y}\coloneqq(f \circ \tilde\jmath)^*\sL_T$ be the corresponding $A$-local system on $Y$ induced via pullback from the tautological local system on $T$. Then we have the following:
\begin{enumerate}
\item The isomorphism $\tilde{\jmath}^*{\sL_X}\cong \sL_{Y}$ induces an injection:
\[
S_0H^i(X,\sL_X)\hookrightarrow S_0H^i(Y,\sL_{Y}).
\]
\item If $Z$ is contained in the open set over which $f$ is a fibration, then the above injection is an isomorphism. 

\item Furthermore, suppose $Z=\{x\}$ is a point contained in the open set over which $f$ is a fibration. Then $H^i(Y,\sL_{Y})\cong H^i(X,\sL_X)\oplus A^{b_{i-2n+1}(F_x)}$, where $b_{i-2n+1}(F_x)$ denotes the Betti number of the fiber $F_x=f^{-1}(x)$.  

\item Suppose that the image of $f$ is an open set $B\subseteq T$, and that $f$ is a locally trivial fibration over $B$ with fiber $F$. Further, suppose that $T\setminus B$ is a hypersurface. Let $\overline{H}{}^i\subseteq H^i(F,\Q)$ be the subspace that is fixed by the monodromy of $f$ for every loop in $B$ whose image in $T$ is trivial. Then:
\[
S_0H^i(X,\sL_X)\cong \overline{H}{}^{i-n}.
\]  
\end{enumerate}
\end{prop}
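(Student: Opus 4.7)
The plan is to derive all four parts from the long exact sequence of the open-closed decomposition $Y \xhookrightarrow{\tilde\jmath} X \xhookleftarrow{i} W$ with $W = f^{-1}(Z)$:
\[
\cdots \to H^i_W(X, \sL_X) \xrightarrow{\alpha} H^i(X, \sL_X) \xrightarrow{\tilde\jmath^*} H^i(Y, \sL_Y) \to H^{i+1}_W(X, \sL_X) \to \cdots,
\]
coming from the distinguished triangle $i_{*}i^{!}\sL_X \to \sL_X \to R\tilde\jmath_*\sL_Y \to$. Applying the left-exact functor $S_0$, the kernel of $S_0\tilde\jmath^*$ equals $S_0(\operatorname{image}\alpha)$, so parts (1) and (2) reduce to controlling this image. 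The essential technical input is the projection-formula identification
\[
R\Gamma_W(X, \sL_X) \simeq R\Gamma_Z(T, Rf_*\Q_X \otimes^L \sL_T),
\]
exhibiting the local cohomology as a Mellin-type transform of a constructible complex supported on $Z \subsetneq T$.

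For (2), I would argue two things under the fibration hypothesis: first, that $\alpha = 0$, by the standard local argument in a tubular neighborhood $f^{-1}(V) \simeq F_z \times V$ where $\alpha$ becomes the Gysin morphism which factors through cup product with the Euler class of the normal bundle of $Z$ in $T$ — this Euler class vanishes for $Z$ contained in the affine torus $T$ (either by triviality of the normal bundle, using that $TT$ is trivial and the relevant short exact sequence of vector bundles splits, or by noncompactness of $Z$ forcing the Euler class into a vanishing cohomology degree); and second, that $S_0 H^{i+1}_W(X, \sL_X) = 0$, which follows from a Gabber-Loeser-style support analysis of the Mellin transform of sheaves supported on $Z \subsetneq T$. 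Combining these, the LES on $S_0$ collapses to $0 \to S_0 H^i(X, \sL_X) \to S_0 H^i(Y, \sL_Y) \to 0$, giving the isomorphism. Part (3) falls out immediately: for $Z = \{x\}$ a single point, the Thom isomorphism strengthens the above to give $H^i_W \cong H^{i-2n}(F_x,\Q) \otimes_\Q A$ free and hence projective; with $\alpha = 0$ throughout, the LES splits into short exact sequences $0 \to H^i(X, \sL_X) \to H^i(Y, \sL_Y) \to H^{i+1}_W \to 0$ that split by projectivity, producing $H^i(Y, \sL_Y) \cong H^i(X, \sL_X) \oplus A^{b_{i-2n+1}(F_x)}$.

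Parts (1) and (4) are the more delicate ones. For (1), $\alpha$ need not vanish when $Z$ meets the critical locus of $f$, so the Gysin-based argument fails; instead I would prove $S_0(\operatorname{image}\alpha) = 0$ directly by analyzing the $A$-module support of $\operatorname{image}\alpha$ via the Mellin-transform identification above. The monodromies of $\sL_T|_Z$ constrain the characters of $\operatorname{Spec}(A)$ at which $H^*_W$ can be supported, and a Gabber-Loeser-style support estimate should rule out Artinian submodules surviving inside $H^i(X, \sL_X)$ — this support analysis is the main technical obstacle I foresee. For (4), I would combine iterated applications of (1) with the Leray spectral sequence $E_2^{p,q} = H^p(B, \underline{H}^q(F) \otimes \sL_T|_B) \Rightarrow H^{p+q}(X, \sL_X)$ for the fibration $f: X \to B$. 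The Artinian part concentrates in the top-degree term $p = n$, and an explicit computation of $S_0 H^n(B, \underline{H}^{i-n}(F) \otimes \sL_T|_B)$ — using that the $\pi_1(T)$-action factors through $\pi_1(B)$ — should identify it with the subspace of $H^{i-n}(F,\Q)$ invariant under $\ker(\pi_1(B) \to \pi_1(T))$, namely $\overline H^{i-n}$.
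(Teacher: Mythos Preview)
Your long exact sequence on $X$ is, after pushing forward along $f$ and invoking the $t$-exactness of the Mellin transform, the same as the one the paper obtains from the attaching triangle for $Rf_*\underline{\Q}_X$ on $T$ followed by perverse cohomology. The paper's choice to stay on $T$ pays off through one simple observation: in $\Perv(T)$ there are no nonzero morphisms between a shifted local system and a perverse sheaf supported on a proper closed subset (a cohomological-degree count). Combined with the identification $S_0\Mel(\sP)=\Mel(\sP_s)$, this disposes of (1) and (2) at once: for (1), the kernel of $\pH^k(Rf_*\underline{\Q}_X)\to\pH^k(R(j\circ f_Y)_*\underline{\Q}_Y)$ is supported on $Z$ and hence cannot meet the maximal smooth sub-object; for (2), since $Rf_*\underline{\Q}_X$ is locally constant near $Z$, the same lemma kills the map $i_*\pH^k(i^!Rf_*\underline{\Q}_X)\to\pH^k(Rf_*\underline{\Q}_X)$, yielding the short exact sequence you want directly. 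Part (4) is handled by the $t$-exactness of $R(j_B)_*$ for the affine open immersion $j_B\colon B\hookrightarrow T$ (Artin vanishing, valid precisely because $T\setminus B$ is a hypersurface), which identifies $\pH^k(Rf_*\underline{\Q}_X)$ with $R(j_B)_*$ of a shifted local system on $B$; its maximal smooth sub-object is then the $\ker(\pi_1(B)\to\pi_1(T))$-invariants. Your Leray collapse in degree $p=n$ is exactly this $t$-exactness statement, which you assert but do not justify.

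The genuine gap is your Euler-class argument for $\alpha=0$ in part (2). There are two independent problems. First, $Z$ is an arbitrary proper closed subset of $T$ and need not be smooth, so there is no tubular neighborhood, normal bundle, or Gysin formalism to invoke. Second, even when $Z$ is smooth, vanishing of the Euler class of the normal bundle does not force $\alpha$ to vanish: the Euler class controls only the composite $H^{i-2c}(W)\cong H^i_W(X)\to H^i(X)\to H^i(W)$, not the map $H^i_W(X)\to H^i(X)$ itself (a point in $\P^1$ has trivial normal bundle, yet $H^0(\{p\})\to H^2(\P^1)$ is an isomorphism); and in any case your local computation in $f^{-1}(V)$ would at best show that the image of $\alpha$ restricts to zero on $f^{-1}(V)$, not that $\alpha=0$. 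Since (3) also relies on this vanishing, the gap propagates. Similarly, for (1) your instinct is right that the image of $\alpha$ has $S_0=0$, but the ``support estimate'' you need is precisely that the Mellin transform of any perverse sheaf supported on $Z\subsetneq T$ has no Artinian submodule, \emph{and} that this survives passage to the image inside $\Mel(\pH^k(Rf_*\underline{\Q}_X))$; both follow immediately once one uses $S_0\Mel=\Mel\circ(\,\cdot\,)_s$ and works at the perverse level, but are not visible from the module side alone.
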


In the case when $T=\C^*$, Proposition~\ref{prop:removeFibers} has the following homological counterpart.

\begin{cor}[Homology Alexander modules, case $n=1$]\label{cor:homologyOpen}
Let $X$ be an algebraic variety with an algebraic map $f:X\rightarrow \C^*$. Let $x\in  \C^*$, let $Y=X\setminus F_x\xhookrightarrow{\tilde\jmath} X$, and let $\sL_Y\coloneqq (f\circ\tilde{\jmath})^*\sL_{\C^*}$. Let $F$ be the generic fiber of $f$, and let $b_k(F)$ be its $k$-th Betti number. 
\begin{enumerate}
\item The inclusion $\tilde{\jmath}: Y\hookrightarrow X$ induces an open inclusion of infinite cyclic covers, $ \widetilde Y\hookrightarrow \widetilde X$, which in turn induces a homomorphism (see \cite[Remark 2.2.3]{EGHMW}):
\[
\Tors_A H_i(Y,\sL_Y)\cong \Tors_A H_i(\widetilde Y,\Q)\twoheadrightarrow  \Tors_A H_i(\widetilde X,\Q)\cong\Tors_A H_i(X,\sL_X).
\]
This homomorphism is surjective.
\item If $x$ is contained in the open set of $\C^*$ over which $f$ is a fibration, then the above surjection is an isomorphism. Furthermore, $H_i(Y,\sL_Y)\cong  H_i(X,\sL_X) \oplus A^{b_{i-1}(F_x)}$.
\item 


Suppose that the image of $f$ is an open set $B\subseteq\C^*$. Suppose that $f$ is a locally trivial fibration over $B$ with fiber $F$. The monodromy of $f$ makes $H_i(F,\Q)$ into a $\pi_1(B)$-module. Let $K = \ker (\pi_1(B)\to \pi_1(\C^*))$. Then,
\[
\Tors_A H_i(X,\sL_X) \cong \frac{H_i(F,\Q)}{\langle \gamma \cdot a - a\mid \gamma\in K,a\in H_i(F,\Q)\rangle}.
\]
\end{enumerate} 
\end{cor}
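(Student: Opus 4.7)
The plan is to deduce this corollary from Proposition~\ref{prop:removeFibers} (its cohomological counterpart) by applying the duality between the maximal Artinian submodules of homology and cohomology Alexander modules for $n=1$ (\cite[Proposition 2.4.1]{EGHMW}, recalled in Remark~\ref{rem:homvscohom}). Since $A = \Q[t^{\pm 1}]$ is a PID, every finitely generated $A$-module $M$ decomposes as $M \cong \Tors_A M \oplus A^{r(M)}$, and the Universal Coefficient relations give $r(H^i) = r(H_i)$ together with $\Tors_A H^i \simeq \Tors_A H_{i-1}$ up to the involution $t \mapsto t^{-1}$ and $\Q$-duality. These two facts allow us to translate each cohomological statement in Proposition~\ref{prop:removeFibers} into a homological one.

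For part (1), the identification $\widetilde Y = Y \times_{\C^*} \widetilde{\C^*} = \widetilde X \setminus p^{-1}(F_x)$ displays $\widetilde Y$ as an open subset of $\widetilde X$, and the resulting map on singular homology is $A$-linear for the deck group action, giving the claimed homomorphism after identifying homology Alexander modules with the homology of the covers as in \cite[Remark 2.2.3]{EGHMW}. Surjectivity on torsion then follows by dualizing the injection $S_0 H^{i+1}(X, \sL_X) \hookrightarrow S_0 H^{i+1}(Y, \sL_Y)$ from Proposition~\ref{prop:removeFibers}(1): under the torsion part of the Universal Coefficient correspondence, an injection becomes a surjection.

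For (2), dualizing the isomorphism in Proposition~\ref{prop:removeFibers}(2) at degree $i+1$ yields the isomorphism $\Tors_A H_i(Y, \sL_Y) \cong \Tors_A H_i(X, \sL_X)$. For the direct sum decomposition, apply Proposition~\ref{prop:removeFibers}(3) at degree $i$ to obtain $H^i(Y, \sL_Y) \cong H^i(X, \sL_X) \oplus A^{b_{i-1}(F_x)}$, which yields the rank equality $r(H_i(Y, \sL_Y)) = r(H_i(X, \sL_X)) + b_{i-1}(F_x)$ via $r(H^i) = r(H_i)$. Combined with the torsion isomorphism and the classification of finitely generated modules over the PID $A$, we obtain $H_i(Y, \sL_Y) \cong H_i(X, \sL_X) \oplus A^{b_{i-1}(F_x)}$ as abstract $A$-modules.

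For (3), Proposition~\ref{prop:removeFibers}(4) applied at degree $i+1$ with $n = 1$ gives $S_0 H^{i+1}(X, \sL_X) \cong H^i(F, \Q)^K$, the $K$-invariants. Dualizing via \cite[Proposition 2.4.1]{EGHMW} identifies $\Tors_A H_i(X, \sL_X)$ with the $\Q$-dual of these invariants, and the standard duality $(V^G)^\vee \cong (V^\vee)_G$ between invariants and coinvariants identifies this with $H_i(F, \Q)_K$, which is precisely the stated formula. The main obstacle is tracking the $t \mapsto t^{-1}$ involution and the degree shift through the duality of \cite[Proposition 2.4.1]{EGHMW}, and verifying that the $A$-action inherited via duality agrees with the natural one on the coinvariants (well-defined because $K$ consists of loops with trivial image in $\pi_1(\C^*)$, so the residual $\pi_1(\C^*)$-action descends). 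An alternative that avoids this bookkeeping would be to mirror the proof of Proposition~\ref{prop:removeFibers} directly on the infinite cyclic cover $\widetilde X$ using excision and the long exact sequence of the pair $(\widetilde X, \widetilde Y)$.
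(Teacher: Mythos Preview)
Your proposal is correct and follows essentially the same route as the paper: reduce to Proposition~\ref{prop:removeFibers} via the PID structure of $A=\Q[t^{\pm 1}]$ and the duality $\Tors_A H_i \cong (\Tors_A H^{i+1})^\vee$ from \cite[Proposition 2.4.1]{EGHMW}, with part~(3) using invariants/coinvariants duality. The one point you leave implicit but should state is that the duality of \cite[Proposition 2.4.1]{EGHMW} is \emph{functorial} in the map $\tilde\jmath$, so that the restriction map on cohomology really dualizes to the pushforward map on homology induced by $\widetilde Y\hookrightarrow\widetilde X$; the paper invokes this functoriality explicitly.
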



In relation to semi-simplicity, we generalize \cite[Theorem 8.0.1]{EGHMW} as follows.

\begin{prop}\label{ss}
If $X$ is smooth and $f\colon X \to T$ is proper, then the Artinian modules $\Art H^i(X, \sL_X)$ are semi-simple $A$-modules.
\end{prop}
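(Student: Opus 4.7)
My plan is to combine the purity statement of Corollary \ref{van}(d) with the Tate-twist formulation of log-monodromy in Theorem \ref{thm1}(4), and then invoke Maschke's theorem.

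First, since $X$ is smooth and $f \colon X \to T$ is proper, any generic fiber $F_x$ is smooth and complete. Hence Corollary \ref{van}(d) applies and gives that the MHS on $\Art H^i(X, \sL_X)$ is pure of weight $i-n$.

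Next, fix any $\sigma \in \pi_1(T)$ and choose $N > 0$ such that $\sigma^N$ acts unipotently on $\Art H^i(X, \sL_X)$; such $N$ exists by the quasi-unipotence part of Theorem \ref{thm1}(4). The same item asserts that
\[
\log \sigma^N \colon \Art H^i(X, \sL_X) \longrightarrow \Art H^i(X, \sL_X)(-1)
\]
is a morphism of MHS. Its source is pure of weight $i-n$ and its target is pure of weight $i-n+2$. Since morphisms of MHS are strict with respect to the weight filtration, any morphism between pure Hodge structures of distinct weights must vanish. Hence $\log \sigma^N = 0$, which forces $\sigma^N = \id$.

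Applied to the standard generators $t_1, \ldots, t_n$ of $\pi_1(T) \cong \Z^n$ and taking $N$ to be a common exponent, this shows that the $A$-action on $M \coloneqq \Art H^i(X, \sL_X)$ factors through the finite quotient ring
\[
A / (t_1^N - 1, \ldots, t_n^N - 1) \;\cong\; \Q[(\Z/N\Z)^n].
\]
By Maschke's theorem this is a semi-simple $\Q$-algebra, so every module over it is semi-simple; since $A$-submodules of $M$ coincide with submodules over the finite quotient, $M$ is semi-simple as an $A$-module. I do not anticipate any substantial obstacle: the real work has been packaged into Corollary \ref{van}(d) and Theorem \ref{thm1}(4), and the only conceptual point is recognizing that purity of the Hodge structure forces the Tate-twisted log-monodromy morphism to vanish identically, after which Maschke's theorem finishes the argument.
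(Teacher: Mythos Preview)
Your argument is correct, and it takes a genuinely different route from the paper's proof. The paper appeals directly to the decomposition theorem of \cite{BBD}: since $X$ is smooth and $f$ is proper, $Rf_*\underline{\Q}_X$ splits as a finite direct sum of shifts of simple perverse sheaves $\sP_\lambda$; the only $\sP_\lambda$ contributing to $\Art H^i(X,\sL_X)$ are the smooth ones, and for a smooth simple perverse sheaf on $T$ the Mellin transform is a simple $A$-module (via Lemma~\ref{lemma_ls}). The authors explicitly remark that this proof does not invoke the mixed Hodge structure at all.

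Your approach instead leverages the Hodge theory already built in the paper: purity of the MHS (Corollary~\ref{van}(d), which rests only on Theorem~\ref{thm1}(2)) combined with the Tate-twist property of $\log\sigma^N$ (Theorem~\ref{thm1}(4)) forces $\log\sigma^N=0$, so the monodromy has finite order and Maschke finishes. There is no circularity, since Corollary~\ref{van}(d) and Theorem~\ref{thm1}(4) are established independently of Proposition~\ref{ss}. Your argument is shorter and yields the slightly stronger conclusion that the $\pi_1(T)$-action on $\Art H^i(X,\sL_X)$ factors through a finite quotient; on the other hand, the paper's argument is independent of the MHS construction and would survive in settings where Hodge theory is unavailable but the decomposition theorem still holds.
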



\subsection{Sketch of our construction of a MHS on $\Art H^i(X, \sL_X)$}
For the reader's convenience, we include here a brief description of our construction of the MHS on $\Art H^i(X, \sL_X)$. First, by using the projection formula, we have a natural isomorphism (see Proposition \ref{prop_iso1})
\begin{equation}\label{eq0}
H^i(X, \sL_X)\cong H^i\big(T, Rf_*\underline{\Q}_X \otimes_\Q \sL_X\big),
\end{equation}
where $Rf_*\underline{\Q}_X$ is regarded here as a complex of mixed Hodge modules on $T$. In Section \ref{sec:smooth}, we show that for a complex of mixed Hodge modules $\sM^\bullet$ on the affine torus $T$, there exists a unique maximal smooth mixed Hodge submodule of each $H^i(\sM^\bullet)$, which we denote by $\sM^i_s$.  By using the $t$-exactness of Gabber-Loeser's Mellin transformation \cite{GL96}, we next prove the following result (see Corollary \ref{cor_GL}).
\begin{theorem}\label{thm_intro}
For any integer $i$, there is a canonical isomorphism 
\begin{equation}\label{i1}
\Art H^i\big(T, \rat(\sM^\bullet)\otimes_\Q \sL_T\big)\cong H^0\big(T, \rat(\sM^i_s)\otimes_\Q \sL_T\big),
\end{equation}
where $\rat$ associates to a complex of mixed Hodge modules its underlying rational constructible complex.
\end{theorem}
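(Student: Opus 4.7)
The plan is to combine the $t$-exactness of Gabber--Loeser's Mellin transformation $\Mel(\sG) := R\Gamma(T, \sG \otimes \sL_T)$ with a structural identification between smooth sub-mixed-Hodge-modules of $H^i(\sM^\bullet)$ and Artinian $A$-submodules of its Mellin transform. I first reduce to the case of a single MHM, and then analyze the defining short exact sequence for $\sM^i_s$.

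\emph{Reduction to a single MHM.} Since $\rat\colon D^b\textup{MHM}(T) \to D^b_c(T,\Q)$ is $t$-exact, $\pH^i(\rat(\sM^\bullet)) \cong \rat(H^i(\sM^\bullet))$; set $\sN := H^i(\sM^\bullet)$. Tensoring with the $\Q$-local system $\sL_T$ is exact on perverse sheaves, and by Gabber--Loeser $\Mel$ sends any perverse sheaf to an $A$-module concentrated in cohomological degree zero. Hence the perverse hypercohomology spectral sequence
\[
E_2^{p,q} = H^p\bigl(T, \pH^q(\rat(\sM^\bullet)) \otimes \sL_T\bigr) \Longrightarrow H^{p+q}\bigl(T, \rat(\sM^\bullet) \otimes \sL_T\bigr)
\]
is supported in the row $p=0$, so it degenerates and produces a canonical isomorphism $H^i(T, \rat(\sM^\bullet) \otimes \sL_T) \cong H^0(T, \rat(\sN) \otimes \sL_T)$. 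Applying $\Art$ to both sides reduces the theorem to showing $\Art H^0(T, \rat(\sN) \otimes \sL_T) \cong H^0(T, \rat(\sN_s) \otimes \sL_T)$ for the single MHM $\sN$.

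\emph{Short exact sequence and the smooth part.} Apply $\rat(-) \otimes \sL_T$ to the defining sequence $0 \to \sN_s \to \sN \to \sN'' \to 0$ in $\textup{MHM}(T)$, where $\sN'' := \sN/\sN_s$. Both operations are exact on the relevant abelian categories, producing a short exact sequence of perverse sheaves. By $t$-exactness of $\Mel$, the long exact sequence for $H^*(T, -\otimes\sL_T)$ collapses to a short exact sequence of $A$-modules
\[
0 \to H^0(T, \rat(\sN_s) \otimes \sL_T) \to H^0(T, \rat(\sN) \otimes \sL_T) \to H^0(T, \rat(\sN'') \otimes \sL_T) \to 0.
\]
Smoothness of $\sN_s$ gives $\rat(\sN_s) \cong \sV[n]$ for a $\Q$-local system $\sV$, so $H^0(T, \rat(\sN_s) \otimes \sL_T) \cong H^n(T, \sV \otimes \sL_T)$ is the top group cohomology of $\pi_1(T) \cong \Z^n$ with coefficients in $\sV_0 \otimes A$. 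Via the Koszul complex computing this group cohomology, one identifies it with the cokernel of a square matrix over $A$ with non-vanishing determinant, hence a finite-dimensional $\Q$-vector space, and therefore Artinian as an $A$-module.

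\emph{Main obstacle.} The crux is to show that the right-hand term $H^0(T, \rat(\sN'') \otimes \sL_T)$ contains no non-zero Artinian submodule; combined with the previous step this forces the maximal Artinian submodule of the middle term to coincide with $H^0(T, \rat(\sN_s) \otimes \sL_T)$. This vanishing rests on the structural content of Gabber--Loeser: for a perverse sheaf $\sG$ on $T$, Artinian $A$-submodules of $\Mel(\sG)$ correspond precisely to (shifted) locally constant sub-perverse-sheaves of $\sG$, and the maximal Artinian submodule of $\Mel(\sG)$ is the Mellin transform of the maximal smooth sub-perverse-sheaf. Maximality of $\sN_s$ as a smooth sub-MHM then ensures $\sN''$ contains no non-zero smooth sub-MHM, whence no non-zero Artinian submodule survives in $\Mel(\rat(\sN''))$. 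Canonicity at every step (the spectral sequence edge map, the defining short exact sequence, and the Gabber--Loeser identification) yields the canonical isomorphism in the statement.
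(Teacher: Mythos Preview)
Your reduction to a single MHM via $t$-exactness of $\Mel$ is the same as the paper's, and your short exact sequence/Koszul argument correctly establishes $\Mel(\rat(\sN_s))\subseteq \Art\Mel(\rat(\sN))$. The problem is the reverse inclusion.

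You invoke the correspondence ``Artinian submodules of $\Mel(\sG)$ $\leftrightarrow$ smooth sub-perverse-sheaves of $\sG$'' and then write: ``Maximality of $\sN_s$ as a smooth sub-MHM then ensures $\sN''$ contains no non-zero smooth sub-MHM, whence no non-zero Artinian submodule survives in $\Mel(\rat(\sN''))$.'' But the correspondence you just cited is about smooth sub-\emph{perverse-sheaves} of $\rat(\sN'')$, not smooth sub-\emph{MHMs} of $\sN''$. Knowing that $\sN''$ has no smooth sub-MHM does not, by itself, rule out a smooth sub-perverse-sheaf $L[n]\hookrightarrow \rat(\sN'')$ that fails to underlie any sub-MHM. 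Equivalently, your argument tacitly assumes $\rat(\sN_s)=(\rat(\sN))_s$, i.e.\ that the maximal smooth sub-MHM has as underlying perverse sheaf the maximal smooth sub-perverse-sheaf. This is precisely the content of the paper's Proposition~\ref{prop_subMHM}, and it is not formal: one uses that a perverse sheaf underlying an MHM is generically a quasi-unipotent local system, passes to a finite cover $g\colon T'\to T$ to make it unipotent, iterates the ``maximal constant sub-object'' construction (Lemma~\ref{lemma_constant}, which lifts to MHM because it is expressed via adjunction and truncation), and pushes back down via $Rg_*$. Without this step, a priori $\rat(\sN_s)$ could be strictly smaller than $(\rat(\sN))_s$, leaving room for a non-zero Artinian piece in $\Mel(\rat(\sN''))$, and your short exact sequence argument would only give a containment, not the claimed isomorphism.
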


In view of \eqref{eq0}, Theorem \ref{thm_intro} induces an isomorphism
\begin{equation}\label{eq_s0}
\Art H^i(X, \sL_X)\cong H^0\big(T, \rat(\sM^i_s)\otimes_\Q \sL_T\big)
\end{equation}
where $\sM^\bullet=Rf_*\underline{\Q}_X$.

By the equivalence 
\[
\mathrm{MHM}(T)_s\cong \mathrm{VMHS}(T)_{ad},
\]
between the category $\mathrm{MHM}(T)_s$ of smooth mixed Hodge modules on $T$ and the category $\mathrm{VMHS}(T)_{ad}$ of admissible variations of mixed Hodge structures on $T$ (with quasi-unipotent monodromy at infinity),
there exists a quasi-unipotent admissible VMHS $\sV$ on $T$ such that, after a shift, the underlying local system of $\sM^i_s$ is isomorphic to the underlying local system $L$ of $\sV$, and for any point $x\in T$ the two mixed Hodge structures $(\sM^i_s)|_x$ and $\sV|_x$ coincide.

In theory, there are different methods to extract a MHS from a quasi-unipotent VMHS on $T$, for example, taking the central fiber of the Deligne extension. Nevertheless, we choose to simply take the stalk at a base point (on the universal cover $\wt T$ of $T$). More precisely, in Lemma \ref{lemma_ls}, we prove that
\begin{equation}\label{eq_lb}
H^0\big(T, \rat(\sM^i_s)\otimes_\Q \sL_T\big)\cong {L}_{\tilde{b}}
\end{equation}
where ${L}_{\tilde{b}}\coloneqq \pi^*(L)|_{\tilde{b}}$ and the $A$-module structure on ${L}_{\tilde{b}}$ is given by the inverse monodromy representation of $L$. Since as a local system on $T$, $L$ supports the VMHS $\sV$, the pullback $\pi^*(L)$ supports the VMHS $\pi^*(\sV)$, and there is a natural MHS on ${L}_{\tilde{b}}$. We define the MHS on $\Art H^i(X, \sL_X)$ to be the one of ${L}_{\tilde{b}}$ via \eqref{eq_s0} and \eqref{eq_lb}. More details are given in Section~\ref{sec_proof}. 

At this end, let us note that one can associate (generalized) cohomological Alexander modules $H^i(X,\sF \otimes_\Q \sL_X)$ to any $\Q$-constructible coefficients $\sF \in D^b_c(X)$ (with Definition \ref{def1} corresponding to the case of the constant sheaf $\underline{\Q}_X$). Moreover, if $\sF$ underlies a complex of mixed Hodge modules, then we get the following generalization of Theorem \ref{thm1}.
\begin{theorem}\label{thm2}
Let  $\sM^\bullet$ be a complex of mixed Hodge modules on $X$ and let $\sF=\rat(\sM^\bullet)$ be the underlying rational constructible complex. Then $\Art H^i(X, \sF\otimes_\Q \sL_X)$ has a natural mixed Hodge structure, satisfying properties analogous to those listed in Theorem \ref{thm1}. 
\end{theorem}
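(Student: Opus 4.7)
The plan is to carry out the proof of Theorem~\ref{thm1} almost verbatim, with the Saito pushforward $f_*\sM^\bullet\in D^b(\mathrm{MHM}(T))$ playing the role previously played by $Rf_*\underline{\Q}_X$. The key observation is that Theorem~\ref{thm_intro}/Corollary~\ref{cor_GL} and Lemma~\ref{lemma_ls} are already formulated for an arbitrary complex of mixed Hodge modules on $T$; only the reduction from $X$ to $T$ in~\eqref{eq0} is specific to the constant sheaf case, and this is precisely what the projection formula repairs once we allow arbitrary coefficients.

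First I would invoke the projection formula: since $\sL_T$ is a flat $\Q$-local system on $T$,
\[
H^i(X,\sF\otimes_\Q \sL_X) \;=\; H^i\bigl(X,\sF\otimes_\Q f^*\sL_T\bigr) \;\cong\; H^i\bigl(T,Rf_*\sF \otimes_\Q \sL_T\bigr).
\]
The complex $Rf_*\sF$ equals $\rat(f_*\sM^\bullet)$ and therefore underlies a complex of mixed Hodge modules on $T$. Setting $\sN^\bullet:=f_*\sM^\bullet$ and writing $\sN^i_s$ for the maximal smooth mixed Hodge submodule of $H^i(\sN^\bullet)$, Theorem~\ref{thm_intro} yields a canonical isomorphism
\[
\Art H^i(X,\sF\otimes_\Q \sL_X) \;\cong\; H^0\bigl(T,\rat(\sN^i_s)\otimes_\Q \sL_T\bigr).
\]
Using the equivalence $\mathrm{MHM}(T)_s\simeq \mathrm{VMHS}(T)_{ad}$ together with Lemma~\ref{lemma_ls}, the right-hand side is identified with the fiber at $\tilde{b}$ of the pullback to $\wt T$ of the local system underlying $\sN^i_s$, and this fiber inherits a natural MHS from the corresponding admissible VMHS.

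The analogues of properties~(1)--(5) of Theorem~\ref{thm1} then transfer with only cosmetic changes. Functoriality~(3) follows from the functoriality of Saito's $f_*$ in $\mathrm{MHM}$ applied to a lift $\phi^*\sM_Y^\bullet\to\sM_X^\bullet$ of the given coefficient map; quasi-unipotence~(4) and base-point independence~(5) follow from admissibility and quasi-unipotence at infinity of the VMHS attached to $\sN^i_s$, exactly as in the proof of Theorem~\ref{thm1}. Property~(1) becomes an injective morphism of MHS
\[
\iota\colon \Art H^i(X,\sF\otimes_\Q\sL_X)\longrightarrow H^{i-n}\bigl(F_b,\sF|_{F_b}\bigr)
\]
at a generic base point $b$, and~(2) follows from~(1) by parallel transport between generic stalks on $T$.

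The main obstacle is property~(1): one must verify that at a generic point $b$ of $f$, the stalk at $b$ of the VMHS associated to $\sN^i_s$ embeds as a sub-MHS of $H^{i-n}(F_b,\sF|_{F_b})$ endowed with its natural Saito MHS. This rests on the fact that over a Zariski-open $U\subseteq T$ on which $f$ is topologically locally trivial, $H^i(f_*\sM^\bullet)|_U$ is a smooth MHM whose fiber at $b\in U$ is, up to the shift by $n=\dim T$, isomorphic to $H^{i-n}(F_b,\sF|_{F_b})$ with its Saito MHS, via proper base change and the compatibility of $i_b^*$ with Saito's formalism. Once this local analysis is carried out, the remainder of the argument is a mechanical replacement of $\underline{\Q}_X$ by $\sF$ throughout Sections~\ref{sec:smooth}--\ref{sec_proof}.
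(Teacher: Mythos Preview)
Your proposal is correct and follows exactly the paper's approach: the paper itself states (Remark after Theorem~\ref{thm2}) that it only gives the construction of the MHS in the generalized setting, via Proposition~\ref{prop_iso1}, Corollary~\ref{cor_GL}, and Theorem~\ref{thm_MHM}, and that properties (1)--(5) are proved by the same arguments as for Theorem~\ref{thm1}. Your outline reproduces precisely this reduction.

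One small caveat on your treatment of property~(1): for general coefficients $\sF$, the condition that $f$ be a topologically locally trivial fibration over $U$ does \emph{not} guarantee that $H^i(f_*\sM^\bullet)|_U$ is smooth, nor that the base-change map to $H^{i-n}(F_b,\sF|_{F_b})$ is an isomorphism. (Take $X=T\times\C$, $f$ the projection, and $\sF$ a skyscraper.) What you need instead is a Zariski-open $U$ over which $f$ is a stratified fibration relative to a stratification adapted to $\sF$, or equivalently, simply choose $U$ so that $Rf_*\sF$ has locally constant cohomology sheaves there---this always exists by constructibility. With that adjustment the rest of your argument for~(1) goes through, and the phrase ``proper base change'' should be read as the base-change isomorphism for locally constant complexes over $U$ rather than the proper base-change theorem per se.
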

\begin{remark}
The proofs of the analogous properties (1-5) in Theorem \ref{thm2} are essentially the same as the ones in Theorem \ref{thm1}. To emphasize the geometric aspects of the paper, we will only prove the properties in Theorem \ref{thm1} and provide the construction of the MHS on $\Art H^i(X, \sF\otimes \sL_X)$ in Theorem \ref{thm2}. 
\end{remark}


\subsection{Comparison with earlier work}

The goal of this section is to discuss the differences between the present paper and  our earlier work \cite{EGHMW}, both in the methods used and in the scope of the results obtained.

As mentioned in Section~\ref{sec:setup}, this paper deals with Hodge-thoretic properties of the maximal Artinian submodules $\Art H^i(X,\sL_X)$ of the cohomological Alexander modules constructed from the pair $(X,f)$, where $X$ is a complex algebraic variety and $f:X\rightarrow T$ is an algebraic map to the complex affine torus $T$. We should start by comparing the setup with that of \cite{EGHMW}. In {\it loc.cit.}, a canonical and functorial mixed Hodge structure is constructed on $\Art H^i(X,\sL_X)$, where $X$ is required to be smooth, $n=1$ (i.e., $T=\C^*$), and $f$ induces an epimorphism on  fundamental groups.

Despite the fact that the present paper studies Alexander modules defined in more generality, we want to emphasize that it is not a generalization of \cite{EGHMW}, as the different methods used in the construction of the mixed Hodge structures in each paper result in these constructions being well suited to study different kinds of problems. In fact, there is very little overlap between the two papers, only Proposition~\ref{ss} and Corollary~\ref{van}(c) are generalizations of results in \cite{EGHMW}. The statement of Corollary~\ref{van}(c)  does not involve mixed Hodge structures, but the proof uses properties that the mixed Hodge structures in both papers have in common. The proofs of \cite[Theorem 8.0.1]{EGHMW} and its present generalization in Proposition~\ref{ss} both use the decomposition theorem of \cite{BBD}, and neither one of them uses mixed Hodge structures.

An essential difference between the two papers lies in the methods used in the construction of the mixed Hodge structure. The present paper uses algebraic methods: Gabber-Loeser's $t$-exactness of the Mellin transformation to translate the objects of study into certain hypercohomology groups of the torus $T$, Hain and Zucker's work on unipotent variations of mixed Hodge structures on $T$, and Saito's theory of Mixed Hodge modules. This construction yields a mixed Hodge structure on $\Art H^*(X,\sL_X)$ which is dependent on a base point on the universal cover of $T$, although different choices of base points yield non-canonically isomorphic mixed Hodge structures in general. On the other hand, the methods used in \cite{EGHMW} are analytic: we use a refinement (called ``thickening'') of Deligne's theory of mixed Hodge complexes of sheaves, which involves de Rham complexes. The resulting mixed Hodge structure in \cite{EGHMW} is independent of base points.

Another notable difference in the construction of the mixed Hodge structure in both papers is the following. The construction in \cite{EGHMW} uses that, in the one variable case, the torsion part of the Alexander modules is known to be quasi-unipotent, which follows from the structure theorem for cohomology jump loci (\cite{BLW,BW15,BW20}). On the other hand, the construction in the present paper does not use the quasi-unipotency of $\Art H^i(X,\sL_X)$. Instead, we obtain the quasi-unipotency of $\Art H^i(X,\sL_X)$ (which was not known in this generality) as a result of the construction of the MHS, using Saito's theory of mixed Hodge modules.


Because of these differences, the present paper and \cite{EGHMW} have different focuses. The focus of the latter is on the Hodge theory of the torsion part of the one variable Alexander modules, and its relation to other mixed Hodge structures through maps coming from geometry.
\ For example, the diagram
\begin{center}
\begin{tikzcd}[row sep = 1.2em]
\ & \widetilde X \arrow[d,"p"]\\
F\arrow[ur,"i_\infty"]\arrow[r,"i"] & X
\end{tikzcd}
\end{center}
obtained from a lift of the inclusion $i$ of a generic fiber $F$ of $f$ to the infinite cyclic cover $\widetilde{X}$ induces canonical maps of rational vector spaces as follows:
$$
H^j(X,\Q)\xrightarrow{H^j(p)}\Art H^{j+1}(X,\sL_X)\xhookrightarrow{H^j(i_\infty)}H^j(F,\Q).
$$
In \cite[Theorem 6.0.1, Corollary 7.2.3]{EGHMW}, we prove that $H^j(p)$ is a morphism of mixed Hodge structures, and that $H^j(i_\infty)$ is a morphism of mixed Hodge structures for every lift $i_\infty$ of any generic fiber $F$ if and only if $\Art H^{j+1}(X,\sL_X)$ is a semisimple $A\cong\Q[t^{\pm 1}]$-module. Among other things, we also obtain relationships with cup products and with the limit mixed Hodge structure (if $f$ is proper), which also involve maps coming from geometry \cite[Proposition 7.1.1, Theorem 9.0.7]{EGHMW}.

We would like to highlight an important aspect of the work in this paper which is independent of mixed Hodge structures: the $t$-exactness of the Mellin transform gives us the isomorphism of $A$-modules in equation (\ref{eq_s0}), which provides a new useful tool for computing $\Art H^*(X,\sL_X)$, as we can see in  Proposition~\ref{prop:removeFibers} and the example of Section~\ref{sec:non-ss}. 

However, this paper does not address generalizations of the main results of \cite{EGHMW}, which answer how the MHS on $\Art H^*(X,\sL_X)$ relates geometrically to other important mixed Hodge structures in the literature. If $n=1$, the map $\iota:\Art H^{j+1}(X,\sL_X)\rightarrow H^{j}(F_b,\Q)$ from Theorem~\ref{thm1}, part (1) is not defined in a way in which we can easily compare it to $H^j(i_\infty)$ from \cite{EGHMW}, so it is hard to determine whether the relationship between mixed Hodge structures given by $\iota$ has a geometric meaning.

The two very different approaches used to define the MHS in \cite{EGHMW} and in the present paper, respectively, make it difficult to provide a direct comparison. So we ask the following. 
\begin{que}
If $T=\C^*$, is there any relation between our construction of the MHS on $\Art H^i(X, \sL_X)={\rm Tors}_A  H^i(X, \sL_X)$ and the one given in \cite{EGHMW}?
\end{que}
We expect that the two MHS are non-canonically isomorphic, and that, when the monodromy action is semi-simple, such an isomorphism can be made canonical.

\subsection{Structure of the paper}
In Section \ref{S:HZ}, we give a brief overview of Hain-Zucker's theory of unipotent VMHS on the complex affine torus $T$. In Section \ref{sec:MT}, we use the Mellin transformation of Gabber-Loeser \cite{GL96} to reduce the proof of Theorem \ref{thm2} to the case $i=0$, $X=T$, $f=id_T$ the identity map, and $\sF$ a perverse sheaf underlying a mixed Hodge module (see Theorem \ref{thm_MHM}). Section \ref{sec:smooth} reduces the problem further to the case when the perverse sheaf is replaced by its maximal smooth sub-object (which underlies, up to a shift, an admissible VMHS). Section \ref{sec_proof} completes the proof of Theorems \ref{thm1} and \ref{thm2}, and of Proposition \ref{ss}. We also justify here Corollaries \ref{cor_dominant} and \ref{iso}, and present a few simple examples.
Proposition~\ref{prop:removeFibers} and Corollary~\ref{cor:homologyOpen} are proved in Section~\ref{sec:remove}. Finally, in Section \ref{sec:non-ss}, we give an example of a {\it singular} complex algebraic variety with a non-semisimple cohomological Alexander module.

\medskip

We assume reader's familiarity with the basic derived calculus and perverse sheaves, see, e.g., \cite{Di} or \cite{M19} for a quick introduction to these topics.

\medskip

\textbf{Acknowledgements.} We thank Mircea Mus\-ta\-\c{t}\u{a} and Greg Pearlstein for useful conversations. E. Elduque is partially supported by an AMS-Simons Travel Grant. 
L. Maxim is partially supported by the Simons Foundation (Collaboration Grant \#567077) and by the Romanian Ministry of National Education (CNCS-UEFISCDI grant PN-III-P4-ID-PCE-2020-0029).  B. Wang  is  partially  supported a Sloan Fellowship and a WARF research
grant.


\section{Unipotent variations of mixed Hodge structure on $T$}\label{S:HZ}
In this section, we give a brief overview of Hain-Zucker's theory of unipotent VMHS on the complex affine torus $T=(\C^*)^n$.

The main result of \cite{HZ1} says that the admissible unipotent VMHS on a smooth quasi-projective variety $Y$ (with base point $y$) with unipotency $\leq r$ correspond to mixed Hodge representations of $\Z[\pi_1(Y, y)]/J^{r+1}$, where $J$ is the augmented ideal, that is, the ideal generated by $\sigma-1$ for all $\sigma\in \pi_1(Y, y)$. In \cite{HZ2}, given a mixed Hodge representation of $\Z[\pi_1(Y, y)]/J^{r+1}$, the corresponding unipotent VMHS is constructed explicitly in the case when $W_1H^1(Y, \Q)=0$. 

\begin{remark} The MHS on $\Z[\pi_1(Y, y)]/J^{r+1}$ is defined using iterated integrals (see \cite{Hain} and \cite{Chen}). An iterated integral on the product of two spaces has a K\"unneth type decomposition as the sum of products of iterated integrals on each space. Moreover, we can use coordinate-wise compactifications of the affine torus $(\C^*)^n$ to define the MHS on $\Z[\pi_1(T, b)]/J^{r+1}$. Thus, to understand the MHS on $\Z[\pi_1(T, b)]/J^{r+1}$, it suffices to understand the MHS on $\Z[\pi_1(\C^*, b')]/J^{r+1}$, where $b'$ is a chosen base point of $\C^*$. The weight filtration on $\C[\pi_1(\C^*, b')]/J^{r+1}$ coincides with the filtration defined by the powers of $J$ (\cite[Page 248]{Hain}), and the Hodge filtration can be obtained by computing the iterated integrals on the unit circle. The $(-1, -1)$ subspace of $\C\green{[}\pi_1(\C^*, b')\green{]}/J^{r+1}$ is generated by 
\[
\log \tau=(\tau-1)-\frac{1}{2}(\tau-1)^2+\cdots
\]
where $\tau\in \pi_1(\C^*, b')$ is the generator with winding number 1. 
\end{remark}

\begin{remark} \label{remark_HZ}
An admissible unipotent VMHS $\sV$ on $(\C^*)^n$ has the following explicit form (see \cite[(4.11)]{HZ2}). We fix the good compactification $(\C^*)^n\subset \P^n$. The canonical extension $\overline{\sV}$ of $\sV$ is a trivial vector bundle on $\P^n$. Moreover, the weight and Hodge filtrations on $\overline{\sV}$ are both induced by the global sections. In other words, there exists a MHS $V$ with weight filtrations $W_\bullet$ and Hodge filtrations $F^\bullet$, such that $\overline{\sV}=V\otimes_\C \sO_{\P^n}$, the weight filtration is given by $W_l\overline{\sV}=W_l V\otimes_\C \sO_{\P^n}$ and the Hodge filtration is given by  $F^p\overline{\sV}=F^p V\otimes_\C \sO_{\P^n}$. The logarithmic connection on $\overline{\sV}$ defines a local system structure on $\overline{\sV}|_{(\C^*)^n}$ such that the weight filtration is locally constant, but the Hodge filtration is not locally constant in general. 
\end{remark}

The following theorem follows from the fact that the monodromy action of an admissible unipotent VMHS defines a mixed Hodge representation (\cite[Theorem 1.6]{HZ1}, \cite[Theorem (2.2)]{HZ2}). 
\begin{theorem}\label{thm_HZ}\normalfont{[Hain-Zucker]}
Let $\sV$ be an admissible quasi-unipotent {VMHS} on $T$. Fixing a base point ${b}\in T$, then for any $\sigma\in \pi_1(T,b)$, the induced map
\[
\log(\sigma^N): \sV|_b\to \sV|_b(-1)
\]
is a morphism of MHS, where $N$ is any positive integer such that $\sigma^N$ is unipotent. 
\end{theorem}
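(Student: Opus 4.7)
The plan is to reduce the quasi-unipotent case to the unipotent case, where the cited Hain--Zucker theorems apply verbatim, and then transport the conclusion back along the covering.

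\textbf{Step 1 (reduction to unipotent monodromy).} Since $\sV$ is quasi-unipotent and $T\cong (\C^*)^n$, choose an integer $M$ divisible enough so that the $M$-th power map $p\colon T'=(\C^*)^n\to T=(\C^*)^n$ satisfies two properties: $\sigma^N$ lies in the image of $p_*\colon \pi_1(T',b')\hookrightarrow \pi_1(T,b)$ for a chosen lift $b'$ of $b$, and $p^*\sV$ is an admissible \emph{unipotent} VMHS on $T'$. Admissibility of a VMHS is preserved by pullback along smooth maps, and unipotence can always be arranged after passing to a sufficiently divisible finite étale cover of a torus. The MHS on $(p^*\sV)|_{b'}$ is canonically the MHS on $\sV|_b$.

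\textbf{Step 2 (apply Hain--Zucker).} By \cite[Theorem 1.6]{HZ1} (or \cite[Theorem (2.2)]{HZ2}), the monodromy of an admissible unipotent VMHS on $T'$ extends to a ring homomorphism
\[
\rho\colon \Q[\pi_1(T',b')]/J^{r+1}\longrightarrow \enmo\bigl(\sV|_b\bigr)
\]
that is a morphism of MHS, where $J$ is the augmentation ideal and $r$ is the unipotency index. Combining the Künneth-type decomposition of iterated integrals recalled in the first remark of this section with the one-variable computation there, for every $\tau\in \pi_1(T',b')$ the series
\[
\log \tau \;=\; (\tau-1) - \tfrac{1}{2}(\tau-1)^2 + \cdots
\]
is a class of pure Hodge--Tate bidegree $(-1,-1)$ in $\Q[\pi_1(T',b')]/J^{r+1}$.

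\textbf{Step 3 (transport back to $T$).} Lift $\sigma^N$ to $\tau\in \pi_1(T',b')$ via $p_*$; by construction of $p$ this is possible. The monodromy of $p^*\sV$ along $\tau$ equals the monodromy of $\sV$ along $\sigma^N$ under the identification of fibers, so $\log(\sigma^N)=\rho(\log \tau)$. As $\rho$ is a MHS morphism and $\log \tau$ is of bidegree $(-1,-1)$, the endomorphism $\log(\sigma^N)$ of $\sV|_b$ is a morphism of MHS with Tate twist $(-1)$, which is exactly the claim.

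\textbf{Main obstacle.} The only nontrivial input is the Hodge-theoretic bookkeeping on the augmentation quotient of the group ring, in particular the position of $\log \tau$ in the $(-1,-1)$ component; this is supplied by the iterated-integral description of the MHS on $\Q[\pi_1(T,b)]/J^{r+1}$ recalled above, and the computation on the factor $\C^*$. The reduction via the finite étale cover is formal, since admissibility, the fiber MHS, and the monodromy representation all pull back cleanly.
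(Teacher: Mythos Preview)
Your approach matches the paper's: the theorem is stated there without a detailed proof, only the one-line justification that it ``follows from the fact that the monodromy action of an admissible unipotent VMHS defines a mixed Hodge representation,'' citing the same Hain--Zucker theorems you invoke. Your Steps 2 and 3 correctly unpack this citation using the description of $\log\tau$ as a $(-1,-1)$ class given in the preceding remark.

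Step 1, however, contains a small but real gap. You claim one can choose a single integer $M$ so that the $M$-th power map $p$ makes $p^*\sV$ unipotent \emph{and} $\sigma^N$ lies in $p_*\pi_1(T')=M\cdot\Z^n$. These two requirements pull in opposite directions: the first wants $M$ highly divisible, the second wants $M$ to divide each coordinate of $N\sigma\in\Z^n$. Concretely, take $n=2$ with monodromy of order $2$ along the first generator and order $3$ along the second, and let $\sigma$ be the first generator with $N=2$; unipotence of $p^*\sV$ forces $6\mid M$, while lifting $\sigma^2=(2,0)$ forces $M\mid 2$, and no $M$ satisfies both. The fix is immediate: either use the finite \'etale cover corresponding to the (finite-index) subgroup of $\pi_1(T)$ acting unipotently, which contains $\sigma^N$ by hypothesis, or first prove the statement for $\sigma^{M}$ with $M$ a sufficiently divisible multiple of $N$ (then $\sigma^M=M\sigma\in M\cdot\Z^n$ always lifts) and recover $\log(\sigma^N)=\tfrac{N}{M}\log(\sigma^M)$. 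With either correction your argument is complete.
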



\section{Mellin transformation}\label{sec:MT}

The following proposition allows us to reduce Theorem \ref{thm2} to the special case when $X=T$ and $f=\id_T$ is the identity map. 
\begin{prop}\label{prop_iso1}
Under the notations of Theorem \ref{thm2}, we have a natural isomorphism
\begin{equation}\label{eq5}
H^i(X, \sF\otimes_\Q \sL_X)\cong H^i\big(T, Rf_*\sF\otimes_\Q \sL_T\big). 
\end{equation}
\end{prop}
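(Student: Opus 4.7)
The plan is to deduce the isomorphism directly from the projection formula. By construction $\sL_X = f^*\sL_T$, so
\[
\sF \otimes_\Q \sL_X \;=\; \sF \otimes_\Q f^*\sL_T,
\]
and the projection formula for the morphism $f\colon X\to T$ with local-system coefficient $\sL_T$ yields a natural isomorphism
\[
Rf_*\bigl(\sF \otimes_\Q f^*\sL_T\bigr) \;\cong\; Rf_*\sF \otimes_\Q \sL_T
\]
in $D^b_c(T)$. Applying $R\Gamma(T,-)$ to both sides and using the identity $R\Gamma(T, Rf_*(-)) = R\Gamma(X,-)$, passage to the $i$-th hypercohomology produces the claimed isomorphism \eqref{eq5}.

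Two small points require attention. First, one has to justify the projection formula for the coefficient $\sL_T$: although its stalks are infinite-dimensional over $\Q$ (each isomorphic to $A$), $\sL_T$ is a local system, hence locally isomorphic to a constant sheaf of $\Q$-vector spaces, and the projection formula is standard in that generality. Alternatively, one can exploit $\sL_T \cong \pi_!\underline{\Q}_{\widetilde T}$, combining the (uncontroversial) projection formula for the covering map $\pi$ with proper base change along the Cartesian square relating $\pi, f, p, \tilde f$ to arrive at the same identification. Second, the isomorphism must respect the $A = \Q[\pi_1(T)]$-module structure; this is automatic from the naturality of the projection formula in the local-system argument $\sL_T$, together with the fact that the $A$-action on $\sL_X = f^*\sL_T$ is pulled back from the deck-transformation action on $\sL_T$.

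Neither step poses a genuine obstacle, and no analytic input is required. The only real care lies in bookkeeping the $A$-linearity: it is convenient to view $\sL_T$ as a sheaf of $\Q$-vector spaces equipped with an extra commuting $A$-action, and then to observe that each of the functors $Rf_*$, $f^*$, $R\Gamma(T,-)$, and $-\otimes_\Q -$ respects this additional structure, so the chain of isomorphisms is $A$-linear throughout.
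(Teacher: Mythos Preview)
Your proof is correct and follows essentially the same approach as the paper: both use the projection formula $Rf_*(\sF\otimes_\Q f^*\sL_T)\cong Rf_*\sF\otimes_\Q \sL_T$, justified by the local constancy of $\sL_T$, together with $R\Gamma(T,Rf_*(-))\cong R\Gamma(X,-)$. Your remarks on the infinite-rank nature of $\sL_T$ and on the $A$-linearity of the resulting isomorphism are reasonable elaborations that the paper leaves implicit.
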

\begin{proof}
We have that $H^i(X, \sF\otimes_\Q \sL_X)\cong H^i(T,Rf_*(\sF\otimes_\Q \sL_X))$. 
We claim that there is a canonical isomorphism
\[
Rf_*\sF\otimes_\Q \sL_T \cong Rf_*(\sF\otimes_\Q \sL_X).
\]
Since $\sL_X=f^*\sL_T$, there is a natural projection morphism (\cite[Lemma 1.4.1]{Sc03})
\[
Rf_*\sF\otimes_\Q \sL_T \to Rf_*(\sF\otimes_\Q \sL_X).
\]
Since $\sL_T$ is locally constant on $T$, one can easily check that the above morphism induces stalk-wise isomorphisms. Thus, the projection morphism is an isomorphism, and the assertion in formula \eqref{eq5} follows. 
\end{proof}

\begin{remark}\label{remark_inverse}
Even though $A=\Q[\pi_1(T)]\cong \Q[t_1^{\pm 1}, \ldots, t_n^{\pm 1}]$ is a commutative ring, an $A$-module has an induced $\Q[\pi_1(X)]$-module structure via $f:X \to T$, over the possibly noncommutative ring $\Q[\pi_1(X)]$. For this reason, we shall distinguish the right from the left $A$-modules. Following \cite{LMW20}, we let $D^b(A)$ be the bounded derived category of {\it right} $A$-modules. On the other hand, a $\pi_1(T)$-representation $V$ is naturally endowed with a left $A$-module structure. In this paper, we also regard such $V$ as a right $A$-module by using the conjugate $A$-module structure on $V$ given by: $v \cdot r= {\bar r} \cdot v$, for $v \in V$ and $r \in A$. (Here ${\bar \cdot}$ denotes the natural involution of $A$, sending each $t_i$ to ${\bar t_i} \coloneqq t_i^{-1}$.) This amounts to regarding the corresponding left $\Q[\pi_1(X)]$-module $V$ as a right $\Q[\pi_1(X)]$-module by setting $v \cdot \gamma \coloneqq  \gamma^{-1} \cdot v$, for all $v \in V$ and $\gamma \in \pi_1(X)$, and extending by linearity. 
\end{remark}

\begin{defn}[\cite{GL96}] The {\it Mellin transformation} is defined as
\begin{equation}\label{eq_mellin}
\Mel: D^b_c(T, \Q)\to D^b_{coh}(A), \quad \Mel(\sF)\coloneqq Rq_*(\sF\otimes_\Q \sL_T)
\end{equation}
where $q: T\to \mathrm{pt}$ is the projection to a point, and $D^b_{coh}(A)$ denotes the bounded coherent complexes of (right) $A$-modules.
\end{defn}

\begin{lemma}\label{lemma_ls}
If $L$ is a $\Q$-local system on $T$ and $\tilde{b}$ is a fixed base point in the universal cover $\wt T$ of $T$, then there is a canonical isomorphism of $A$-modules \begin{equation}\label{eq33} \Mel(L)\cong {L}_{\tilde{b}}[-n],\end{equation} where ${L}_{\tilde{b}}\coloneqq \pi^*(L)|_{\tilde{b}}$ and the $A$-module structure on ${L}_{\tilde{b}}$ is given by the inverse monodromy representation of $L$. 
\end{lemma}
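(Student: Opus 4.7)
The plan is to combine the projection formula for $\pi$ with the contractibility of $\wt{T}\cong \C^n$, reducing the computation to an explicit Koszul resolution over $A$. Writing $\rho\colon \pi_1(T)\to \mathrm{GL}(L_{\tilde b})$ for the monodromy of $L$, the projection formula and $\sL_T=\pi_!\underline{\Q}_{\wt{T}}$ give $L\otimes_\Q\sL_T\cong\pi_!(\pi^*L)$. Since $\wt{T}$ is simply connected, parallel transport from $\tilde b$ yields a canonical $\Q$-linear trivialization $\pi^*L\cong \underline{L_{\tilde b}}$, and applying $\pi_!$ gives an isomorphism $\pi_!\pi^*L\cong L_{\tilde b}\otimes_\Q\sL_T$ of $\Q$-local systems on $T$. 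This is \emph{not} an isomorphism of $A$-local systems in general: the trivialization fails to be equivariant for the deck-group action by exactly the monodromy of $L$.

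Second, I would compute $\Mel(\underline{\Q}_T)=R\Gamma(T,\sL_T)\cong \Q[-n]$, with $\Q$ an $A$-module via the augmentation $t_i\mapsto 1$. By Künneth on $T=(\C^*)^n$ this reduces to $n=1$, where $R\Gamma(\C^*,\sL_{\C^*})$ is computed by $A\xrightarrow{\,t-1\,}A$ in degrees $0$ and $1$, with cohomology $A/(t-1)A\cong\Q$ in degree $1$. In general one obtains the Koszul complex of the regular sequence $(t_1-1,\dots,t_n-1)$ in $A$, which is a free $A$-resolution of $\Q$. Combining with the first step gives $\Mel(L)\cong L_{\tilde b}\otimes_\Q\Q[-n]\cong L_{\tilde b}[-n]$ as complexes of $\Q$-vector spaces.

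The main obstacle will be identifying the correct $A$-module structure on $L_{\tilde b}$. Concretely, the trivialization induces a $\Q[\pi_1(T)]$-module isomorphism $\phi(v\otimes \gamma)=\rho(\gamma^{-1})v\otimes \gamma$ identifying the diagonal $\pi_1(T)$-action on $L_{\tilde b}\otimes A$ with the action on the second factor only; transporting the right $A$-action of $\gamma$ through $\phi$ yields the operator $\rho(\gamma^{-1})\otimes R_\gamma$, where $R_\gamma$ denotes right multiplication by $\gamma$ in $A$. After reducing the Koszul complex modulo $(t_i-1)$, the $R_\gamma$-factor descends to the augmentation, leaving the residual action $v\mapsto \rho(\gamma^{-1})v$ on $L_{\tilde b}$, exactly the inverse monodromy representation. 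Some care is needed to match the paper's right-action conventions for $\pi_1(T)$ and $A$ (Remark~\ref{remark_inverse}).
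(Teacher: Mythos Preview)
Your argument is correct, but it proceeds differently from the paper. The paper restricts to the compact subtorus $(S^1)^n\subset (\C^*)^n$, uses compactness to pass from $R\Gamma$ to $R\Gamma_c$, applies the projection formula for $\pi_{S^1}\colon\R^n\to (S^1)^n$, and then invokes Poincar\'e duality on $\R^n$ together with the contractibility of $\R^n$ to obtain $H^i\cong H_{n-i}\cong L_{\tilde b}$ when $i=n$ and $0$ otherwise. Your approach instead keeps everything on $T$ itself: after trivializing $\pi^*L$ by parallel transport, you reduce to computing $\Mel(\underline{\Q}_T)=R\Gamma(T,\sL_T)$ directly via the Koszul complex on $(t_1-1,\dots,t_n-1)$, and then carefully track how the isomorphism $\phi$ twists the right $A$-action. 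The paper's route is more topological and avoids any explicit chain-level computation, while yours is more algebraic: the Koszul description gives a concrete free $A$-resolution of $\Q$, which makes the residual $A$-module structure on $L_{\tilde b}$ transparent (the $R_\gamma$ factor dies under augmentation, leaving $\rho(\gamma^{-1})$). Both arrive at the inverse monodromy action, and both rely on the contractibility of the universal cover; the paper just packages the degree shift as Poincar\'e duality rather than as Koszul cohomology.
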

\begin{proof}
In fact, fix an isomorphism  $T=(\C^*)^n$. 
Let $(S^1)^n\subset (\C^*)^n$ be the compact subgroup of $T=(\C^*)^n$, and let $\pi_{S^1}: \R^n\to (S^1)^n$ be {the restriction of $\pi$}. Notice that $\sL_T|_{(S^1)^n}\cong R(\pi_{S^1})_!\underline{\Q}_{\R^n}$, where the $A$-module structure on $R(\pi_{S^1})_!\underline{\Q}_{\R^n}$ is induced by deck transformations. Then,
\begin{align*}
H^i(\Mel(L))&=H^i(T,L\otimes_\Q \sL_T)\cong H^i\left((S^1)^n,L|_{(S^1)^n}\otimes_\Q \sL_T|_{(S^1)^n}\right) & \text{Homotopy eq.}\\
&\cong H^i\left((S^1)^n,L|_{(S^1)^n}\otimes_\Q (\pi_{S^1})_!\ul \Q_{\R^n}\right) &
\\
&\cong H^i\left((S^1)^n,(\pi_{S^1})_!(\pi_{S^1}^*(L|_{(S^1)^n})\otimes_\Q \ul \Q_{\R^n})\right)
&
\text{Projection formula}
\\
&\cong H^i_c\left((S^1)^n,(\pi_{S^1})_!\pi_{S^1}^*(L|_{(S^1)^n})\right)\cong H^i_c\left(\R^n,\pi_{S^1}^*(L|_{(S^1)^n})\right)
&
\text{$(S^1)^n$ is compact}
\\  
&\cong H_{n-i} \left(\R^n, \pi_{S^1}^*(L|_{(S^1)^n})\right)
&
\text{Poincar\'e duality}
\\
&\cong H_{n-i} \left(\C^n, \pi^*L\right)
&
\text{Homotopy eq.}
\\
&\cong {L}_{\tilde{b}} \text{ when }  i=n, \text{ and } 0 \text{ otherwise}.
\end{align*}
Notice that the $\pi_1(T)$-action on $H^0(\C^n, \pi^*(L))$ via deck transformations and the $\pi_1(T)$-action on ${L}_{\tilde{b}}$ via monodromy  are inverse to each other. Remark \ref{remark_inverse} provides a conceptual explanation for this fact. 
\end{proof}


\begin{prop}\label{prop_keyiso}
Let $L$ be a $\Q$-local system on $T$, and let $V$ be the $A$-module associated to the monodromy representation of $L$. For any $\Q$-constructible complex $\sF$, there is a natural isomorphism of $\Q$-vector spaces
\begin{equation}\label{eq34}
H^i\big(T, \sF\otimes_\Q L\big)\cong H^i\big(\Mel(\sF)\otimes^L_A V\big) 
\end{equation}
where $\otimes^L_A$ denotes the derived tensor product of right and, resp.,  left $A$-modules.
\end{prop}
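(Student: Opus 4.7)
The plan is to exhibit the claimed isomorphism as a derived projection formula over $A$: first identify $L$ with $\sL_T\otimes_A V$, then resolve $V$ by finitely generated free $A$-modules and move $R\Gamma$ past the tensor product one free summand at a time.

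First I would establish the canonical identification of $\Q$-local systems on $T$
\[
L\;\cong\;\sL_T\otimes_A V,
\]
where $\sL_T$ is viewed as a sheaf of right $A$-modules and $V$ as a left $A$-module (as in Remark~\ref{remark_inverse}). This is a direct computation: the stalk of $\sL_T$ at any point of $T$ is a free rank one right $A$-module, so the stalk of the right-hand side is $V$, and one checks that the monodromy action matches. Since $\sL_T$ is locally free of rank one over $A$, it is $A$-flat, and hence $\sL_T\otimes_A V\cong \sL_T\otimes_A^L V$. Tensoring with $\sF$ over $\Q$ and reassociating yields
\[
\sF\otimes_\Q L\;\cong\;(\sF\otimes_\Q\sL_T)\otimes_A^L V
\]
in the derived category of sheaves of $\Q$-vector spaces on $T$, where $\sF\otimes_\Q\sL_T$ is regarded as a complex of sheaves of right $A$-modules.

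Next, because $A=\Q[\pi_1(T)]$ is a regular Noetherian ring and $V$ is finitely generated as an $A$-module (being finite-dimensional over $\Q$), $V$ admits a bounded resolution $P^\bullet\to V$ by finitely generated free left $A$-modules. For each free piece $P^k=A^{m_k}$, the projection formula is trivial:
\[
R\Gamma\big(T,(\sF\otimes_\Q\sL_T)\otimes_A P^k\big)\;\cong\;R\Gamma(T,\sF\otimes_\Q\sL_T)\otimes_A P^k\;=\;\Mel(\sF)\otimes_A P^k.
\]
Totalizing along the resolution $P^\bullet$ (which is bounded, thanks to the finite global dimension of $A$) gives
\[
R\Gamma\big(T,\sF\otimes_\Q L\big)\;\cong\;\Mel(\sF)\otimes_A^L V,
\]
and passing to $H^i$ yields the claim.

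The main obstacle is the first step: establishing $L\cong\sL_T\otimes_A V$ with correct conventions, carefully reconciling the right $A$-module structure on $\sL_T$ coming from deck transformations with the left $A$-module structure on $V$ arising from monodromy (as flagged in Remark~\ref{remark_inverse}). Once this identification is in place, the remaining steps are formal derived-category manipulations made possible by the $A$-flatness of $\sL_T$ and the finite global dimension of $A$.
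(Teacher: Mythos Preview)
Your proposal is correct and follows essentially the same approach as the paper: both rest on the identification $L\cong\sL_T\otimes_A V$ and a projection formula for $Rq_*$ over $A$. The only difference is presentational---the paper invokes the projection formula $Rq_*(\sF\otimes_\Q\sL_T)\otimes_A^L V\cong Rq_*(\sF\otimes_\Q\sL_T\otimes_A^L q^*V)$ as a black box in one line, whereas you unpack it by resolving $V$ by bounded free $A$-modules and checking termwise, which amounts to a proof of that projection formula in this setting.
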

\begin{proof}
By the projection formula, we have:
$$Rq_*(\sF\otimes_\Q \sL_T)\otimes^L_A V\cong Rq_*(\sF\otimes_\Q \sL_T \otimes^L_A q^*V) \cong Rq_*(\sF \otimes_\Q L).$$
The assertion follows by taking cohomology on both sides.
\end{proof}

Let us also recall here the following important result from \cite[Theorem 3.4.1]{GL96} (see also \cite[Theorem 3.2]{LMW18}):
\begin{theorem}[Gabber-Loeser]\label{thm_GL}
The Mellin transformation $\Mel: D^b_c(T, \Q)\to D^b_{coh}(A)$ is a t-exact functor with respect to the perverse t-structure on $D^b_c(T, \Q)$ and the standard t-structure on $D^b_{coh}(A)$. 
\end{theorem}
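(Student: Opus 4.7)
The goal is to verify that $\Mel$ sends perverse sheaves to $A$-modules concentrated in cohomological degree zero, equivalently that it carries ${}^pD^{\leq 0}_c(T,\Q)$ into $D^{\leq 0}_{coh}(A)$ and ${}^pD^{\geq 0}_c(T,\Q)$ into $D^{\geq 0}_{coh}(A)$. By the projection formula $\Mel(\sF) = R\Gamma(T, \sF \otimes_\Q \sL_T)$, so the statement becomes a bound on the cohomological amplitude of the hypercohomology of a perverse sheaf twisted by the tautological $A$-local system. I would combine a d\'evissage on the perverse t-structure with a key cohomological computation for $\sL_T$, handling the two half-exactness statements separately (and in fact dually to each other via Verdier duality, so that only one needs a direct proof).

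\textbf{Reduction to IC-sheaves.} Every perverse sheaf on $T$ admits a finite filtration whose subquotients are intersection cohomology complexes $\IC_{\overline{V}}(L)$ for $L$ a local system on a smooth locally closed subvariety $V \hookrightarrow T$. Since $\Mel$ is triangulated and the vanishing conditions on both sides are stable under extensions, it suffices to verify the statement on this class of generators. Using the adjunction triangles from $j_!, j_*$ applied to an open inclusion $j\colon V\hookrightarrow \overline V$, this further reduces the problem to showing that $R\Gamma(V, L \otimes_\Q \sL_T|_V)$ is concentrated in cohomological degree $\dim V$ for a local system $L$ on a smooth locally closed $V\subseteq T$.

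\textbf{Key vanishing.} The cornerstone is the computation $R\Gamma(T, \sL_T) \cong \Q[-n]$, which is precisely Lemma~\ref{lemma_ls} applied to the trivial local system, and which extends by a K\"unneth argument to the restriction of $\sL_T$ to locally closed subvarieties. Combining this with Artin's affine vanishing theorem for perverse sheaves on the affine variety $T \cong (\C^*)^n$---which provides $H^j(T,\sP)=0$ for $j<0$ and dually $H^j_c(T,\sP)=0$ for $j>0$ whenever $\sP$ is perverse---and with a Koszul-type resolution reflecting the regularity of $A \cong \Q[t_1^{\pm 1},\ldots,t_n^{\pm 1}]$ of Krull dimension $n$, forces the Mellin transform to land in a single degree. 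The miracle is that the shift $[-n]$ in $R\Gamma(T,\sL_T)$ cancels precisely the perverse shift on $T$, so that the numerics on both sides align.

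\textbf{Main obstacle.} The principal difficulty is aligning three numerical invariants whose conspiracy is non-obvious: the perverse shift (depending on the dimension of the support stratum), the Artin affine vanishing bound (also dimension-dependent), and the cohomological amplitude of $\sL_T$ (fixed at $n$). A secondary obstacle is establishing coherence over $A$: since $\sL_T$ has infinite-dimensional $\Q$-stalks, $\Mel(\sF)$ is not a priori finite over $A$, and one must use a finite \v{C}ech cover of $T$ by affine open subsets adapted to a constructible stratification of $\sF$ in order to realize $\Mel(\sF)$ as a bounded complex of finitely generated $A$-modules, whose cohomology is then automatically coherent by noetherianity of $A$.
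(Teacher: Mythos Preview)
The paper does not give its own proof of this theorem; it is quoted from \cite[Theorem 3.4.1]{GL96} (with a pointer to \cite[Theorem 3.2]{LMW18}). So there is no ``paper's proof'' to compare against, and your proposal must be judged on its own.

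There is a genuine error and a genuine gap.

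\textbf{The error.} You state Artin vanishing with the wrong sign: for an affine morphism $q$, the functor $Rq_*$ is \emph{right} $t$-exact and $Rq_!$ is \emph{left} $t$-exact, so for a perverse $\sP$ on the affine torus one has $H^j(T,\sP)=0$ for $j>0$ and $H^j_c(T,\sP)=0$ for $j<0$, exactly the opposite of what you wrote. This matters: once corrected, Artin vanishing applied to $\sP\otimes_\Q\sL_T$ (viewed as a perverse sheaf of $A$-modules) gives $\Mel(\sP)\in D^{\le 0}$ directly, with no d\'evissage needed. The nontrivial half of the theorem is $\Mel(\sP)\in D^{\ge 0}$, and your sketch does not address it.

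\textbf{The gap.} Your reduction step does not go through. Intermediate extensions $\IC_{\overline V}(L)$ are neither $j_!L[\dim V]$ nor $Rj_*L[\dim V]$, and the attaching triangles for $j_!,Rj_*$ produce cones supported on $\overline V\setminus V$ that are not themselves of the form ``local system on a smooth stratum''; so the problem does not reduce to computing $R\Gamma(V,L\otimes\sL_T|_V)$. Moreover, the assertion that this last complex is concentrated in a single degree for an arbitrary locally closed $V\subset T$ is false without further hypotheses, and no ``K\"unneth argument'' produces it. The actual argument for the harder half (as in \cite{GL96} or \cite{LMW18}) proceeds via duality: one shows that $\Mel$ intertwines Verdier duality on $T$ with $R\Hom_A(-,A)$ up to the conjugation involution on $A$, using that $\sL_T$ is a rank-one free $A$-local system with $\D\sL_T\cong\overline{\sL_T}[2n]$; then the bound $\Mel(\sP)\in D^{\ge 0}$ follows from the bound $\Mel(\D\sP)\in D^{\le 0}$ already obtained from Artin vanishing. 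Your proposal gestures at duality but does not set up this exchange, and the Koszul/regularity remark is not what bridges the two halves.
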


By Theorem \ref{thm_GL}, for any $\sF \in D^b_c(T, \Q)$, we have natural isomorphisms
$$H^i(\Mel(\sF)) \cong \Mel(^p\sH^i(\sF))\cong H^0(\Mel(^p\sH^i(\sF))),$$
where $^p\sH^i(-)$ denotes the perverse cohomology functor.
This yields the following.
\begin{cor}\label{cor_GL}
Let $\sF$ be a $\Q$-constructible complex on $T$. Then
\[
H^i(T, \sF\otimes_\Q \sL_T)\cong H^0\big(T, \,^p\sH^i(\sF)\otimes_\Q \sL_T\big). 
\]
In particular, if $\sP$ is a $\Q$-perverse sheaf on $T$, then for any $i\neq 0$,
\[
H^i(T, \sP\otimes_\Q \sL_T)=0. 
\]
\end{cor}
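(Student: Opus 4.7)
The plan is to deduce this corollary essentially by unwrapping the definition of $\Mel$ and invoking the $t$-exactness statement of Theorem~\ref{thm_GL}, which has already been set up in the lead-in immediately preceding the corollary.

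First, I would observe that for any $\sF \in D^b_c(T,\Q)$, the definition $\Mel(\sF) = Rq_*(\sF \otimes_\Q \sL_T)$ together with the fact that $q\colon T \to \mathrm{pt}$ gives
\[
H^i\big(\Mel(\sF)\big) \;\cong\; H^i(T,\sF \otimes_\Q \sL_T),
\]
where on the left we take the $i$-th cohomology in the standard $t$-structure on $D^b_{coh}(A)$. Second, by Theorem~\ref{thm_GL}, $\Mel$ is $t$-exact from the perverse $t$-structure on $D^b_c(T,\Q)$ to the standard one on $D^b_{coh}(A)$, and this commutation of cohomology functors is exactly the displayed chain of isomorphisms
\[
H^i\big(\Mel(\sF)\big) \;\cong\; \Mel\big({}^p\sH^i(\sF)\big) \;\cong\; H^0\big(\Mel({}^p\sH^i(\sF))\big)
\]
stated just before the corollary (the second isomorphism uses that $\Mel$ applied to a perverse sheaf is already concentrated in degree zero, again by $t$-exactness). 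Unwinding the definition once more, $H^0(\Mel({}^p\sH^i(\sF))) \cong H^0(T, {}^p\sH^i(\sF) \otimes_\Q \sL_T)$, yielding the first claim.

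For the ``in particular'' part, I would simply specialize to a perverse sheaf $\sP$: in that case ${}^p\sH^i(\sP) = 0$ for all $i \neq 0$, so the isomorphism just established gives $H^i(T, \sP \otimes_\Q \sL_T) = 0$ for $i \neq 0$.

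There is essentially no obstacle here since the corollary is a formal consequence of Theorem~\ref{thm_GL}; the only point that requires a bit of care is making explicit that ``standard $t$-exactness'' on $D^b_{coh}(A)$ is exactly the statement that taking ordinary cohomology of a complex of $A$-modules commutes with $\Mel$ applied to perverse cohomology, so that no hidden spectral sequence or filtration arguments are needed.
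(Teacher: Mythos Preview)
Your proof is correct and follows exactly the same approach as the paper: the corollary is deduced as a formal consequence of the $t$-exactness of $\Mel$ (Theorem~\ref{thm_GL}), via the displayed chain of isomorphisms $H^i(\Mel(\sF)) \cong \Mel({}^p\sH^i(\sF)) \cong H^0(\Mel({}^p\sH^i(\sF)))$ together with the identification $H^i(\Mel(\sF)) = H^i(T,\sF\otimes_\Q\sL_T)$, and the ``in particular'' part is obtained by specializing to a perverse sheaf.
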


If $\sM^\bullet$ is a complex of mixed Hodge modules and $\sF\cong \rat(\sM^\bullet)$ is the underlying $\Q$-constructible complex, then $^p\sH^i(\sF)\cong \rat(H^i(\sM^\bullet))$. Therefore, Theorem \ref{thm2} reduces to proving the following result. 
\begin{theorem}\label{thm_MHM}
Let $\sM$ be a mixed Hodge module on $T$ and let $\sP=\rat(\sM)$ be the underlying perverse sheaf. For a choice of $\wt b\in \wt T$, the submodule $\Art H^{0}(T, \sP\otimes_\Q \sL_T)$ has a natural mixed Hodge structure. \end{theorem}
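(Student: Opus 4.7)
The plan is to reduce the problem to the smooth case, apply the $t$-exactness of the Mellin transformation, and then transport the MHS from an associated admissible VMHS via its stalk at $\tilde{b}$. More precisely, Section~\ref{sec:smooth} will produce a canonical maximal smooth sub-mixed Hodge module $\sM_s\hookrightarrow \sM$, and the MHM-version of Corollary~\ref{cor_GL} promised as Theorem~\ref{thm_intro} will yield a canonical isomorphism of $A$-modules
\[
\Art H^0(T,\sP\otimes_\Q\sL_T)\;\cong\; H^0\!\big(T,\rat(\sM_s)\otimes_\Q\sL_T\big).
\]
Granted this, the problem reduces to putting a natural MHS on the right-hand side.

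Since $\sM_s$ is smooth, the equivalence $\mathrm{MHM}(T)_s\cong \mathrm{VMHS}(T)_{ad}$ will identify $\sM_s$ with an admissible quasi-unipotent VMHS $\sV$ on $T$ whose underlying $\Q$-local system $L$ satisfies $\rat(\sM_s)\cong L[n]$. Applying Lemma~\ref{lemma_ls}, we obtain a canonical isomorphism of $A$-modules
\[
H^0\!\big(T,\rat(\sM_s)\otimes_\Q\sL_T\big)\;\cong\; L_{\tilde b},
\]
where, crucially, the $A$-module structure on $L_{\tilde b}$ is dictated by the inverse of the monodromy representation of $L$ (cf.\ Remark~\ref{remark_inverse}).

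Finally, the fiber $\sV|_b$ at $b=\pi(\tilde b)$ is a MHS whose underlying rational vector space is $L_b$, which we identify with $L_{\tilde b}$ via $\pi$. Transporting this MHS along the composition of the two isomorphisms above furnishes the desired MHS on $\Art H^0(T,\sP\otimes_\Q\sL_T)$. Its naturality follows from the fact that every step is functorial: $\sM_s$ is canonical in $\sM$ by maximality, the Mellin transform is a functor, and the equivalence with $\mathrm{VMHS}(T)_{ad}$ is an equivalence of categories. Dependence on the base point manifests only via the last step, since different $\tilde b$ lying over the same $b$ produce literally the same MHS, while different $b$ produce MHSs non-canonically isomorphic through parallel transport of the VMHS $\sV$ (compatible with the Hain--Zucker Theorem~\ref{thm_HZ}).

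The main obstacle lies not in this assembly but upstream, in the first displayed isomorphism above: one has to show that, after Mellin transformation, the maximal Artinian submodule of $\Mel(\sP)$ sees exactly the smooth part of $\sM$. The easy direction is that smooth MHMs produce finite-dimensional (hence Artinian) Mellin transforms, as Lemma~\ref{lemma_ls} already records. The harder direction---that a perverse sheaf whose only smooth sub-MHM is zero has Mellin transform with vanishing Artinian submodule---is the structural content to be developed in Section~\ref{sec:smooth}, and should rest on analyzing the supports of $\Mel(\sP)$ inside $\spec A$ via Gabber--Loeser's description together with a Noether-style argument on the lattice of sub-mixed Hodge modules of $\sM$.
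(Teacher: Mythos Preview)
Your proposal is correct and follows essentially the same route as the paper: reduce to the smooth part via Proposition~\ref{prop_s0}/Theorem~\ref{thm_intro}, lift $\sP_s$ to a sub-MHM $\sM_s$ via Proposition~\ref{prop_subMHM}, identify with a stalk via Lemma~\ref{lemma_ls}, and read off the MHS from the fiber of the associated admissible VMHS. The only divergence is in your closing speculation about the ``harder direction'' of Proposition~\ref{prop_s0}: the paper does not argue via support loci and a Noetherian lattice argument on sub-MHMs, but rather works purely at the perverse-sheaf level using local duality to produce a functorial isomorphism $\Hom_A(N,\Mel(\sP))\cong \Hom_{\Perv(T)}(L_N[n],\sP)$ for Artinian $N$, from which the equality $\Art\Mel(\sP)=\Mel(\sP_s)$ is immediate.
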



\section{The maximal smooth sub-objects}\label{sec:smooth}
Given a perverse sheaf $\sP$ on a pure-dimensional complex manifold $Y$, a sub-object in the abelian category of perverse sheaves
\[
\sP' \hookrightarrow \sP
\]
is called {\it smooth} if $\sP'$ is the shift of a local system on $Y$. Among all smooth sub-objects, there exists a unique maximal one, which we call the {\it maximal smooth sub-object} of $\sP$, and we denote it by $\sP_{s}$. Consider the short exact sequence of perverse sheaves on $Y$,
\[
0\to \sP_{s}\to \sP\to \sP/\sP_{s}\to 0.
\]
Since extensions of smooth objects in the category of perverse sheaves are smooth, the quotient $\sP/\sP_{s}$ does not contain any nontrivial smooth sub-object. 

A sub-object $\sP'$ of $\sP$ is called {\it constant} if it is the shift of a global constant local system on $Y$. Among all constant sub-objects, there exists a unique maximal one, which we call the {\it maximal constant sub-object} of $\sP$, and we denote it by $\sP_c$. 

The maximal constant sub-object can be characterized by the following result. 
\begin{lemma}\label{lemma_composition} Let $q: Y\to \mathrm{pt}$ be the projection to a point. 
The sub-object $\sP_c$ is equal to the image of the composition
\begin{equation}\label{eq_composition}
q^*\big(\tau^{\leq -\dim Y}Rq_{*}\sP\big)\to q^*Rq_{*}\sP\to \sP,
\end{equation}
where the first morphism is induced by the truncation morphism $\tau^{\leq -\dim Y}Rq_{*}\sP \to Rq_{*}\sP$ and the second is the adjunction morphism. \end{lemma}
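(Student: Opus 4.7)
The plan is to identify the image of the composition \eqref{eq_composition} with $\sP_c$ by means of the adjunction $q^* \dashv Rq_*$ together with a direct perverse-cohomology computation. Set $d := \dim Y$, $V^i := H^i(Y,\sP)$, and $K := \tau^{\leq -d} Rq_*\sP$. Since $K$ is a bounded complex of $\Q$-vector spaces, it splits in $D^b(\Q\text{-}\mathrm{Vect})$ as $K \cong \bigoplus_{i \leq -d} V^i[-i]$, so $q^*K \cong \bigoplus_{i \leq -d} \underline{V^i}_Y[-i]$. Because $Y$ is smooth and pure-dimensional, $\underline{V^i}_Y[d]$ is a perverse sheaf, so $\underline{V^i}_Y[-i] = \underline{V^i}_Y[d][-i-d]$ has perverse degree $i+d \leq 0$. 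Thus $q^*K$ lies in ${}^pD^{\leq 0}(Y)$, with only the summand $i=-d$ contributing to $\pH^0$; namely, $\pH^0(q^*K) = \underline{V^{-d}}_Y[d]$. Since $\sP$ is perverse, the image in $\Perv(Y)$ of the composition \eqref{eq_composition} equals the image of the induced morphism $\alpha : \underline{V^{-d}}_Y[d] \to \sP$.

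Next I would pin down $\alpha$ via the adjunction $q^* \dashv Rq_*$. Under the chain of natural isomorphisms
\[
\Hom(\underline{V^{-d}}_Y[d], \sP) \cong \Hom(V^{-d}[d], Rq_*\sP) \cong \Hom_\Q(V^{-d}, V^{-d}),
\]
tracing the construction of \eqref{eq_composition} shows that $\alpha$ corresponds to $\id_{V^{-d}}$. For the inclusion $\sP_c \subseteq \im(\alpha)$, let $\underline{W}_Y[d] \hookrightarrow \sP$ be any constant sub-object; under the same adjunction this inclusion corresponds to a linear map $W \to V^{-d}$, and by naturality it factors as $\underline{W}_Y[d] \to \underline{V^{-d}}_Y[d] \xrightarrow{\alpha} \sP$. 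Thus every constant sub-object of $\sP$ is contained in $\im(\alpha)$.

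For the reverse inclusion $\im(\alpha) \subseteq \sP_c$, I would use that on a smooth connected $Y$ the perverse sheaf $\underline{\Q}_Y[d]$ is the intersection cohomology complex of the trivial local system, and hence a simple object of $\Perv(Y)$. It follows that $\underline{V^{-d}}_Y[d]$ is semisimple (a direct sum of copies of $\underline{\Q}_Y[d]$), so every perverse quotient is again of the form $\underline{W'}_Y[d]$ for some quotient $W'$ of $V^{-d}$; in particular $\im(\alpha)$ is itself a constant sub-object of $\sP$, giving $\im(\alpha) \subseteq \sP_c$ and finishing the proof. The main obstacle is this final semisimplicity step, which genuinely uses that $Y$ is connected so that $\underline{\Q}_Y[d]$ is simple; if $Y$ has several connected components of dimension $d$ one would run the argument component by component, though in the intended application $Y=T$ is already connected.
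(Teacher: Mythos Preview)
Your proof is correct and follows essentially the same strategy as the paper's: reduce to the canonical map $\alpha\colon \underline{V^{-d}}_Y[d]\to\sP$ arising from adjunction, then show $\im(\alpha)=\sP_c$ by checking both inclusions. The only real difference is how you arrive at $\alpha$. The paper observes directly that $H^i(Y,\sP)=0$ for $i<-d$ (from the support condition for perverse sheaves), so $\tau^{\leq -d}Rq_*\sP\cong V^{-d}[d]$ and $q^*K$ is already a constant perverse sheaf; you instead use formality of complexes of $\Q$-vector spaces and then extract $\pH^0$. Both routes land on the same $\alpha$. For $\sP_c\subseteq\im(\alpha)$ the paper uses a naturality square while you unwind the adjunction bijection explicitly; these are the same argument. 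For $\im(\alpha)\subseteq\sP_c$ the paper simply asserts that a perverse quotient of a constant object is constant, whereas you justify this via simplicity of $\underline{\Q}_Y[d]$ on a connected smooth $Y$; your version is more careful, and your remark about handling disconnected $Y$ component by component is exactly right (the paper only needs the case $Y=T$, which is connected).
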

\begin{proof}
Since $\sP$ is a perverse sheaf on $Y$, we have $H^i(Rq_*\sP)=0$ when $i<-\dim Y$. Thus, $\tau^{\leq -\dim Y}Rq_{*}\sP$ has cohomology only possibly in degree $-\dim Y$. Hence, $q^*\big(\tau^{\leq -\dim Y}Rq_{*}\sP\big)$ is the shift of a global constant local system on $Y$. As a quotient, the image is also a constant sub-object of $\sP$. 

On the other hand, since the morphisms in \eqref{eq_composition} come from {natural transformations}, we have a commutative diagram
\[
\begin{tikzcd}[row sep = 1.2em]
q^*\big(\tau^{\leq -\dim Y}Rq_{*}(\sP_c)\big)\arrow[r,"\sim"]\arrow[d]& q^*Rq_{*}(\sP_c)\arrow[r,"\sim"]\arrow[d]& \sP_c\arrow[d,hookrightarrow]\\
q^*\big(\tau^{\leq -\dim Y}Rq_{*}\sP\big)\arrow[r]& q^*Rq_{*}\sP\arrow[r]& \sP.
\end{tikzcd}
\]
Since $\sP_c$ is the shift of a global constant local system, one can easily check that the composition of the first row is an isomorphism. Therefore, the image of $q^*\big(\tau^{\leq -\dim Y}Rq_{*}(\sP_c)\big)$ in $\sP$ is equal to $\sP_c$. Since the composition factors through $q^*\big(\tau^{\leq -\dim Y}Rq_{*}\sP\big)$, we know that the image of $q^*\big(\tau^{\leq -\dim Y}Rq_{*}\sP\big)$ in $\sP$ contains $\sP_c$. 
\end{proof}

By Theorem \ref{thm_GL}, the Mellin transformation $\Mel$ restricts to a functor
\[
\Mel: \Perv(T, \Q)\to A\textrm{-}\mathrm{Mod}_{coh}
\]
from the abelian category of $\Q$-perverse sheaves to the abelian category of finitely generated $A$-modules. 

The following proposition is comparable to \cite[Lemma 5.1]{LMW20}. 

\begin{prop}\label{prop_s0}
Let $\sP$ be a perverse sheaf on $T$, with maximal smooth sub-object $\sP_s$. Then
\begin{equation}\label{mels}
\Mel(\sP_s)=\Art \Mel(\sP)
\end{equation}
as submodules of $\Mel(\sP)$. 
\end{prop}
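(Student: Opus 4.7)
The plan is to establish the equality as two inclusions. The easy inclusion $\Mel(\sP_s) \subseteq \Art \Mel(\sP)$ follows by applying the $t$-exactness of $\Mel$ (Theorem \ref{thm_GL}) to the inclusion $\sP_s \hookrightarrow \sP$ in $\Perv(T)$: this gives an injection $\Mel(\sP_s) \hookrightarrow \Mel(\sP)$ of $A$-modules. Writing $\sP_s = L[n]$ for a $\Q$-local system $L$ on $T$, Lemma \ref{lemma_ls} identifies $\Mel(\sP_s)$ with the stalk $L_{\tilde b}$, which is a finite-dimensional $\Q$-vector space and hence Artinian over $A$.

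For the reverse inclusion, I would first reduce to the case $\sP_s = 0$. Applying $\Mel$ to the short exact sequence $0 \to \sP_s \to \sP \to \sP/\sP_s \to 0$ gives a short exact sequence of $A$-modules by $t$-exactness. Since $\Art$ is left-exact and $\sP/\sP_s$ has no nonzero smooth subobject (as noted before the proposition, because extensions of smooth perverse sheaves are smooth), it suffices to prove the claim: if $\sQ \in \Perv(T)$ satisfies $\sQ_s = 0$, then $\Art \Mel(\sQ) = 0$.

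To prove the claim, I would argue by contradiction. After extending scalars to $\bar\Q$, suppose there is an embedding $A_{\bar\Q}/\mathfrak{m}_\chi \hookrightarrow \Mel(\sQ)_{\bar\Q}$ for some character $\chi : \pi_1(T) \to \bar\Q^*$. The key step is the natural isomorphism
\[
\Hom_A(A/\mathfrak{m}_\chi, \Mel(\sQ)) \;\cong\; \Hom_{\Perv(T)}(L_{\chi^{-1}}[n], \sQ),
\]
where $L_{\chi^{-1}}$ denotes the rank-one local system with monodromy $\chi^{-1}$; this comes from combining three ingredients: (a) the identification $\Hom_{\Perv(T)}(L_{\chi^{-1}}[n], \sQ) = H^{-n}(T, L_\chi \otimes \sQ)$, valid since both sheaves are perverse and $(L_{\chi^{-1}})^\vee = L_\chi$; (b) Proposition \ref{prop_keyiso} applied with $L_\chi$, yielding $H^{-n}(T, L_\chi \otimes \sQ) \cong H^{-n}(\Mel(\sQ) \otimes^L_A (A/\mathfrak{m}_\chi)) = \mathrm{Tor}^A_n(\Mel(\sQ), A/\mathfrak{m}_\chi)$; and (c) the Koszul identification $\mathrm{Tor}^A_n(M, A/\mathfrak{m}_\chi) \cong \Hom_A(A/\mathfrak{m}_\chi, M)$, which follows from the length-$n$ Koszul resolution of $A/\mathfrak{m}_\chi$ in the regular Laurent polynomial ring $A$ of dimension $n$.

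The embedding above corresponds via the key isomorphism to a nonzero morphism $L_{\chi^{-1}}[n] \to \sQ$ in $\Perv(T)$. Since $L_{\chi^{-1}}[n]$ is simple in $\Perv(T)$, this morphism is injective, producing a smooth subobject of $\sQ_{\bar\Q}$ and contradicting $(\sQ_{\bar\Q})_s = (\sQ_s)_{\bar\Q} = 0$; a Galois-averaging argument handles the descent back to $\Q$ so that no extension of scalars is actually needed. The main obstacle, and technical heart of the argument, is the key isomorphism: assembling it requires carefully tracking the degree shifts (Proposition \ref{prop_keyiso} pairs the cohomological degree $-n$ with the top Tor), and the Koszul identification crucially uses that $\mathfrak{m}_\chi$ is generated by a regular sequence of length $n = \dim A$, so that the failure of lower-dimensional toric factors would be geometrically significant.
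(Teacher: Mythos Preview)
Your proof is correct and follows essentially the same strategy as the paper: both hinge on the natural isomorphism
\[
\Hom_A(N,\Mel(\sP))\;\cong\;\Hom_{\Perv(T)}(L_N[n],\sP)
\]
for Artinian $N$, assembled from Proposition~\ref{prop_keyiso} together with a duality input (Local Duality in the paper, the Koszul identification in your version). The only substantive difference is one of packaging. The paper invokes Local Duality for an \emph{arbitrary} Artinian $N$ and applies the isomorphism directly to the inclusion $\Art\Mel(\sP)\hookrightarrow\Mel(\sP)$, obtaining in one stroke a map $L_N[n]\to\sP$ whose image lies in $\sP_s$; this avoids any base change. You instead reduce to $\sP_s=0$, pass to $\bar\Q$, and work with the simple module $A_{\bar\Q}/\mathfrak m_\chi$; the payoff is that $L_{\chi^{-1}}[n]$ is simple, so the resulting map is automatically injective and its image visibly smooth. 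Your ``Galois-averaging'' descent is a little breezy but is standard (and in fact unnecessary: the Koszul/Local Duality identification $\mathrm{Tor}^A_n(M,A/\mathfrak m)\cong\Hom_A(A/\mathfrak m,M)$ already holds for any maximal ideal of the regular ring $A$ over $\Q$, so you can run the whole argument with a simple $\Q$-local system and skip the base change).
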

\begin{proof}
Let $N, M$ be $A$-modules such that $N$ is Artinian, and let us denote $N^\vee = \Hom_{\Q}(N,\Q)$. By the Local Duality theorem (see \cite[Lecture 11]{I+07} or \cite[5.1, 5.2]{smith}),
\[
R\Hom_A(N,M)
\cong
M\otimes^L_A R\Hom_A(N,A)
\cong
M\otimes^L_A \Ext^n_A(N,A) [-n]
\cong
M\otimes^L_A N^\vee [-n]
\]
Let $L_N$ be a $\Q$-local system and let $N$ be its stalk, seen as an Artinian $A$-module. Note that by Lemma~\ref{lemma_ls}, $N\cong \Mel(L_N[n])$. Also, to the dual local system $L^\vee$ corresponds the module $N^\vee = \Hom_{\Q}(N,\Q)$. Let us apply $H^0$ to the equation above, with $M=\Mel(\sP)$:
\begin{align*}
\Hom_A(N,\Mel(\sP)) &\cong
H^{-n}(\Mel(\sP)\otimes_A^L N^\vee) \\
&\hspace{-0.8mm}\overset{\Mel}{\cong}
\hspace{-0.5mm}H^{-n}(T,\sP\otimes L^\vee)
&
\text{by Proposition~\ref{prop_keyiso}}\\
&\cong
\Hom_{\Perv(T)}(L[n],\sP).
\end{align*}

Since the above isomorphisms are functorial in $L_N$, applying them to the inclusion map  $\Art\Mel(\sP)\to \Mel(\sP)$, we obtain a map $L_N[n]\to \sP$, whose image is clearly contained in $\sP_s$. We conclude that $\Art \Mel(\sP)\subseteq \Mel(\sP_s)$. The reverse inclusion is clear.
\end{proof}

\begin{prop}\label{prop_subMHM}
Let $\sM$ be a mixed Hodge module on the affine torus $T$. Let $\sP=\rat(\sM)$ be the underlying perverse sheaf. Then there exists a {unique} sub-mixed Hodge module $\sM_s$ of $\sM$ such that $\rat(\sM_s)=\sP_s$. 
\end{prop}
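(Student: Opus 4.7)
The plan is to construct $\sM_s$ as the sum of all smooth sub-mixed-Hodge-modules of $\sM$ and then verify that this recovers $\sP_s$ via a two-stage reduction, first to unipotent monodromy and then to the constant case. By Noetherianity of $\mathrm{MHM}(T)$, such a sum is a well-defined sub-MHM; since extensions of shifted local systems in $\mathrm{Perv}(T)$ are again shifted local systems, $\sM_s$ is smooth, so $\rat(\sM_s) \subseteq \sP_s$. Uniqueness follows from maximality together with the fact that the faithful exact functor $\rat$ reflects equality of subobjects. The substantive task is the reverse inclusion $\sP_s \subseteq \rat(\sM_s)$.

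I would first observe that every smooth sub-perverse-sheaf $L[n] \hookrightarrow \sP$ has quasi-unipotent monodromy. Indeed, on a dense open $U \subseteq T$ where $\sM|_U$ is smooth, Saito's equivalence identifies $\sM|_U$ with an admissible VMHS on $U$; since $\pi_1(T) \cong \Z^n$ is generated by loops around the coordinate hyperplanes at infinity of the compactification $T \subset \P^n$, admissibility forces quasi-unipotency, a property inherited by the sub-local-system $L|_U$ and hence by $L$. Let $\pi_N \colon T \to T$ be the $N$-th power isogeny, a finite étale Galois cover with group $G_N = (\Z/N\Z)^n$. Since sub-MHMs of $\sM$ correspond to $G_N$-equivariant sub-MHMs of $\pi_N^*\sM$, and $\pi_N^* L$ becomes unipotent for $N$ sufficiently divisible, the problem reduces to the unipotent-monodromy case: the maximal unipotent smooth sub-MHM of $\pi_N^*\sM$ is canonical, hence automatically $G_N$-equivariant, and descends to $\sM$.

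In the unipotent case, I would iterate the construction of Lemma~\ref{lemma_composition}. Saito's theory provides $q^*$, $Rq_*$, and the standard truncations on $D^b\mathrm{MHM}(T)$ compatibly with their perverse counterparts, so the image of
\[
q^*\bigl(\tau^{\leq -n} Rq_* \sM\bigr) \to \sM
\]
in $\mathrm{MHM}(T)$ is a maximal constant sub-MHM $\sM_c$ satisfying $\rat(\sM_c) = \sP_c$. Setting $\sM_0 = \sM_c$ and inductively letting $\sM_{k+1}$ be the preimage in $\sM$ of the maximal constant sub-MHM of $\sM/\sM_k$ yields an ascending chain in $\mathrm{MHM}(T)$ that stabilizes by Noetherianity. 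The main obstacle is to show this chain captures every unipotent smooth sub-perverse-sheaf: for such an $L$ with commuting nilpotent log-monodromy operators $N_1, \dots, N_n$ on the stalk, the iterated-kernel filtration $F_0 = \bigcap_i \ker N_i$, $F_{k+1} = \{v : N_i v \in F_k \text{ for all } i\}$ has trivial (constant) successive quotients, and an induction on $k$ shows $F_k[n] \subseteq \rat(\sM_k)$, placing $L[n]$ inside $\rat(\sM_r)$ for $r$ bounded by the total nilpotency index.
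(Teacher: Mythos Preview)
Your proof is correct and follows essentially the same two–stage strategy as the paper: pass to a finite cover of $T$ to make $\sP_s$ unipotent, then in the unipotent case iterate Lemma~\ref{lemma_constant} (the maximal constant sub-MHM). The only differences are presentational. For the descent, you invoke Galois equivariance of the canonical construction under $G_N=(\Z/N\Z)^n$, whereas the paper pushes $\sM_1$ forward along the finite cover $g$ and takes the image of $Rg_*\sM_1\to Rg_*g^*\sM\to\sM$ via the adjunction; these are equivalent ways of recovering a subobject downstairs. For the unipotent step, you spell out the kernel filtration $F_k$ of the commuting nilpotent logarithms and match it against the chain $\sM_k$, while the paper simply notes that a nonzero unipotent local system on a torus has nontrivial constant part (since $\pi_1(T')$ is abelian) and appeals to Noetherianity; your version makes the termination and exhaustion explicit. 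One small wording point: what forces quasi-unipotency at the coordinate hyperplanes of $T\subset(\P^1)^n$ is not admissibility per se but the quasi-unipotent local monodromy built into Saito's definition of mixed Hodge modules.
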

Since the functors of pullback, pushforward and truncations over a point lift to complexes of mixed Hodge modules, Lemma \ref{lemma_composition} implies the following lemma. 
\begin{lemma}\label{lemma_constant}
Let $\sM$ be a mixed Hodge module on a complex algebraic variety $Y$. Let $\sP=\rat(\sM)$. Then there exists a sub-mixed Hodge module $\sM_c$ of $\sM$ such that $\rat(\sM_c)=\sP_c$. 
\end{lemma}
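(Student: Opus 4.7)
The plan is to lift each ingredient used in the construction of $\sP_c$ in Lemma~\ref{lemma_composition} to Saito's derived category $D^b(\mathrm{MHM}(Y))$. Concretely, for the structure morphism $q\colon Y\to\mathrm{pt}$, Saito's formalism provides lifts of $Rq_*$ and $q^*$ to MHM as well as a lift of the adjunction natural transformation $q^*Rq_* \to \mathrm{id}$; moreover, on the point the bounded derived category $D^b(\mathrm{MHM}(\mathrm{pt}))$ admits the standard t-structure (its heart being the abelian category of graded-polarizable $\Q$-MHS), so the truncation $\tau^{\leq -\dim Y}$ is available there. Each of these operations is compatible with its constructible-sheaf analogue via $\rat$, and this compatibility is the hook that lets us transport the recipe of \eqref{eq_composition} into MHM.

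First I would form the morphism
\[
\alpha\colon q^*\bigl(\tau^{\leq -\dim Y}Rq_*\sM\bigr)\longrightarrow \sM
\]
in $D^b(\mathrm{MHM}(Y))$ by mimicking \eqref{eq_composition} one level up: push $\sM$ forward to the point, truncate, pull back to $Y$, and compose with the lifted adjunction. By construction $\rat(\alpha)$ is precisely the composition appearing in Lemma~\ref{lemma_composition}, so its underlying map already has image $\sP_c$.

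Next I would take $H^0$ of $\alpha$ with respect to the tautological t-structure on $D^b(\mathrm{MHM}(Y))$ (whose heart is $\mathrm{MHM}(Y)$ and whose underlying constructible t-structure is the perverse one) and define $\sM_c$ to be the image, in the abelian category $\mathrm{MHM}(Y)$, of the resulting morphism. Since $\rat$ is t-exact and commutes with the formation of images of morphisms in the heart, $\rat(\sM_c)$ agrees with the image of $\,^{p}\sH^0(\rat(\alpha))$, which by Lemma~\ref{lemma_composition} is $\sP_c$. The one point requiring care — and really the only place where Saito's machinery has to do work for us — is checking that the lifted adjunction and the lifted truncation are compatible with their classical counterparts under $\rat$; both compatibilities are built into Saito's construction, so I do not expect a substantive obstacle. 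The proof is essentially an exercise in functoriality.
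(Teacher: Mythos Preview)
Your proposal is correct and follows exactly the paper's own approach: the paper simply notes that the functors $q^*$, $Rq_*$, and the truncation $\tau^{\leq -\dim Y}$ (together with the adjunction morphism) lift to Saito's $D^b(\mathrm{MHM})$, so the characterization of $\sP_c$ as an image in Lemma~\ref{lemma_composition} can be run verbatim one level up to produce $\sM_c$. Your write-up is a faithful, more detailed unpacking of that one-line argument.
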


\begin{proof}[Proof of Proposition \ref{prop_subMHM}]
Since $\sP=\rat(\sM)$, on a Zariski open subset of $T$, $\sP$ is the shift of a local system which has quasi-unipotent monodromy around the boundaries. Since $\sP_s$ is smooth and is a sub-perverse sheaf of $\sP$, $\sP_s$ is a quasi-unipotent local system on $T$. In other words, there exists a finite covering map $g: T'\to T$ such that $g^*(\sP_s)$ is the shift of a unipotent local system. 

{If $g^*(\sP_s)$ is non-zero, then $(g^*(\sP_s))_c$ is not trivial, since $\pi_1(T')$ is abelian. }Iterating Lemma \ref{lemma_constant}, we know that there exists a sub-mixed Hodge module $\sM_1$ of $g^*(\sM)$ such that $\rat(\sM_1)=\rat(g^*\sM)_s$. Here, we note that $g^*(\sP_s)=(g^*\sP)_s$. Define $\sM_s$ to be the image of the composition
\[
Rg_*\big({\sM_1}\big)\to Rg_*\big(g^*\sM\big)\to \sM,
\]
where the second map is induced by the adjunction morphism  $Rg_!g^! \to id$ together with the fact that, since $g$ a finite covering map, the functors $Rg_!=Rg_*$ and $g^!=g^*$ preserve perverse sheaves (see, e.g., \cite[Corollary 5.2.15]{Di}).
Then $\rat(\sM_s)=\sP_s$. {Uniqueness follows from the fact that $\rat$ is faithful and exact, so the set of subobjects of $\sM$ injects into the set of subobjects of $\rat(\sM)$.}
\end{proof}


\section{Proof of the main theorems}\label{sec_proof}

We can now complete the proof of Theorem \ref{thm_MHM}, and hence of Theorem \ref{thm2}.
\begin{proof}[Proof of Theorem \ref{thm_MHM}]
Let $\sM$ be a mixed Hodge module on $T$, let $\sP=\rat(\sM)$ and let $b = \pi(\tilde b)$. Then we have canonical isomorphisms
\begin{equation}\label{eq_5}
S_0H^0(T, \sP\otimes \sL_T)  \cong S_0\Mel(\sP)\overset{\eqref{mels}}{=} \Mel(\sP_s) \overset{\eqref{eq33}}{\cong}  L_{\tilde{b}}\cong L_{b},
\end{equation}
where $L=\sP_s[-n]$ is the underlying local system of $\sP_s$ and $L_{\tilde{b}}=\pi^*(L)|_{{\wt b}}$.
Since $L$ supports a VMHS {(Proposition~\ref{prop_subMHM})}, the $\Q$-vector space {$L_{b}$}, and hence $S_0H^0(T, \sP\otimes_\Q \sL_T)$, carry natural mixed Hodge structures. 
\end{proof}

\begin{proof}[Proof of property (1) of Theorem \ref{thm1}]


We first define the map $\iota$. By Proposition \ref{prop_iso1} and Corollary \ref{cor_GL}, we have the isomorphisms
\begin{equation}\label{eqa}
H^i(X, \sL_X)\cong H^i\big(T, Rf_*\underline{\Q}_X\otimes \sL_T\big)\cong H^0\big(T, \,^p\sH^i\big(Rf_*\underline{\Q}_X\big)\otimes \sL_T\big).
\end{equation}
Let $\sP=\,^p\sH^i\big(Rf_*\underline{\Q}_X\big)$ with maximal smooth sub-object $\sP_s$, and let $L = \sP_s[-n]$. 
Combining \eqref{eqa} and \eqref{eq_5}, we get the following isomorphisms: 
\begin{equation}\label{eq:Alex-Stalk}
S_0H^i(X, \sL_X)\overset{\eqref{eqa}}\cong S_0H^0(T,\sP\otimes \sL_T)\overset{\eqref{eq_5}}\cong L_{b}.
\end{equation}
Let $U\subset T$ be a nonempty Zariski open set over which $f$ is a topologically locally trivial fibration (see, e.g., \cite[Corollary 5.1]{Ve76}). In particular, $Rf_*\underline{\Q}_X$ is a locally constant complex on $U$, and hence the restriction of $\sP$ to $U$ is also smooth (see \cite[Section 4]{LMW20} for the definition and properties of locally constant constructible complexes, and also \cite{B08} where such complexes are called {cohomologically locally constant}). Let us denote the resulting local system $L_U^\dagger\coloneqq \sP|_U[-n]$. Then we have $(R^{i-n}f_*\ul \Q_X)|_U \cong L_U^\dagger$. 

The inclusion $\sP_s\subseteq \sP$ yields an injection $\wt \iota\colon L|_U\subset L^\dagger_U$. The map $\iota$ will be defined from the stalk of $\wt\iota$, as the following composition:
\begin{equation}\label{eq:defOfIota}
\iota\colon S_0H^i(X, \sL_X)\overset{\eqref{eq:Alex-Stalk}}{\cong} L_b ={(L|_U)}|_{b} \xrightarrow{\wt\iota|_b} {(L^\dagger_U)}|_{b} \cong  H^{i-n}(F_b, \Q),
\end{equation}
where the last isomorphism uses the fact that $f$ is a locally trivial fibration over $U$.

To show that $\iota$ is a MHS morphism, it will suffice to verify that all morphisms in \eqref{eq:defOfIota} are MHS morphisms. The isomorphism \eqref{eq:Alex-Stalk} is a MHS isomorphism because of the way we define the MHS on $S_0H^i(X, \sL_X)$. 
For $\wt\iota|_b$, it suffices to show that $\wt\iota$ is a morphism of VMHS. For this, notice that the VMHS structure of both $L^\dagger_U$ and $L$ is induced from the mixed Hodge module structure on {$\sP$}.  Finally, the natural base change morphism  $(R^{i-n}f_*\ul \Q_X)_b \to H^{i-n}(F_b, \Q)$ is an isomorphism of MHS.
\end{proof}

Before proving the properties (2-5) in Theorem \ref{thm1}, we notice that the $\pi_1(T)$-action on 
$\Art H^i(X, \sL_X)$  is quasi-unipotent. This follows from Proposition \ref{prop_subMHM}, applied to the perverse sheaf $\sP=\,^p\sH^i\big(Rf_*\underline{\Q}_X\big)$. We can further reduce to the case when the $\pi_1(T)$-action on $\Art H^i(X, \sL_X)$ is unipotent, as follows. Let $h_T: T'\to T$ be any finite covering map. Then, we can choose a (universal) covering map $\pi'\colon\wt T\to T'$ {with the following commutative diagram}:
\[
\begin{tikzcd}[row sep = 1.5em]
\widetilde{X}\arrow[r,"p'",dashrightarrow]\arrow[d,equals]
&
X'\arrow[r,"f'"]\arrow[d,"h"]\arrow[dr,phantom,very near start]
&
T'\arrow[d,"h_T"] & 
\wt T \arrow[l,dashrightarrow,"\pi'"',"\exists"]\arrow[d,equals]
\\
\wt X \arrow[r,"p"]
&
X\arrow[r,"f"]
&
T
&
\wt T \arrow[l,"\pi"']
\end{tikzcd}
\]
{where the middle square is a Cartesian product and $p'$ is the pullback of $\pi'$ by $f'$. Using the top row, we can define $\sL_{X'} = p'_!\underline \Q_{\wt X}$ and the corresponding Alexander modules.} By \eqref{eq_p} and the fact that $h$ is a proper map, we have {the following $\Q[\pi_1(T')]$-module isomorphisms}
\begin{multline}\label{eq_2line}
H^i(X', \sL_{X'})= H^i(X', p'_!\underline{\Q}_{\widetilde{X}}) \cong H^i(X, h_*p'_!\underline{\Q}_{\widetilde{X}})\\
\cong H^i(X, h_!p'_!\underline{\Q}_{\widetilde{X}})\cong H^i(X, p_!\underline{\Q}_{\widetilde{X}})= H^i(X, \sL_X).
\end{multline}

\begin{lemma}\label{lem:finiteCover}
The isomorphism in (\ref{eq_2line}) induces a MHS isomorphism $\Art H^i(X', \sL_{X'})\cong \Art H^i(X, \sL_X)$, where both MHS are constructed using the same base point $\widetilde{b}\in\widetilde T$.
\end{lemma}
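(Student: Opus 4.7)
The plan is to relate both MHS constructions through the finite \'etale cover $h_T:T'\to T$. Since $\pi_1(T)\cong \Z^n$ is abelian, every finite-index subgroup is normal, so $h_T$ is a Galois cover with finite abelian group $G=\pi_1(T)/\pi_1(T')$.

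First, since $h_T$ is \'etale and the middle square of the displayed diagram is Cartesian, smooth base change yields $h_T^*Rf_*\underline{\Q}_X\cong Rf'_*\underline{\Q}_{X'}$. Because $h_T^*$ is t-exact on the MHM t-structure (as $h_T$ is \'etale), taking ${}^p\sH^i$ and lifting to MHM, we obtain a canonical isomorphism $h_T^*\sM_X \cong \sM_{X'}$, where $\sM_X$ is the mixed Hodge module on $T$ underlying $\sP_X = {}^p\sH^i(Rf_*\underline{\Q}_X)$ (as in the proof of Theorem \ref{thm1}(1)) and $\sM_{X'}$ is its analog on $T'$.

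The key step is to prove $h_T^*(\sM_X)_s = (\sM_{X'})_s$, where $(-)_s$ denotes the maximal smooth sub-MHM from Proposition \ref{prop_subMHM}. The inclusion $\subseteq$ is immediate because $h_T^*$ preserves smoothness (the \'etale pullback of a local system is a local system). For the reverse, the uniqueness of the maximal smooth sub-MHM forces $(\sM_{X'})_s$ to be stable under $G$ inside $h_T^*\sM_X$, so Galois descent produces a sub-MHM $\mathcal{R}\subseteq\sM_X$ with $h_T^*\mathcal{R}=(\sM_{X'})_s$; since descent of a local system along a finite \'etale Galois cover is again a local system, $\mathcal{R}$ is smooth and hence $\mathcal{R}\subseteq(\sM_X)_s$, giving $(\sM_{X'})_s \subseteq h_T^*(\sM_X)_s$.

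Letting $L_X, L_{X'}$ denote the admissible VMHS underlying $(\sM_X)_s[-n], (\sM_{X'})_s[-n]$, we then have $L_{X'}=h_T^*L_X$, and the equality $\pi=h_T\circ\pi'$ yields $\pi'^*L_{X'}=\pi^*L_X$ as VMHS on $\wt T$. Hence their stalks at $\tilde b$ agree as MHS, which via \eqref{eq:Alex-Stalk} applied to both $X$ and $X'$ produces the desired MHS isomorphism. To verify this coincides with the isomorphism on $\Art$ induced by \eqref{eq_2line}, one checks that both arise canonically from $h_T^*\sM_X=\sM_{X'}$ through the Mellin transform: applying the projection formula (using $Rh_{T*}\sL_{T'}=R(h_T\circ\pi')_!\underline{\Q}_{\wt T}=\sL_T$) to $h_T^*\sP_X\otimes\sL_{T'}$ realizes \eqref{eq_2line} via Proposition \ref{prop_iso1}, and the same formula applied to $(\sM_X)_s$ recovers the stalk identification via Lemma \ref{lemma_ls}. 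The main obstacle is the Galois-descent step, which hinges on Saito's formalism being compatible with finite \'etale covers; this follows from $h_T^*$ restricting to an equivalence from $\mathrm{MHM}(T)$ onto the $G$-equivariant subcategory of $\mathrm{MHM}(T')$.
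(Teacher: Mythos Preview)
Your proof is correct and follows essentially the same route as the paper: both arguments identify the VMHS $L_{X'}$ with $h_T^*L_X$, take stalks at $\tilde b$, and verify compatibility with \eqref{eq_2line} via the projection formula using $R{h_T}_*\sL_{T'}\cong\sL_T$. The paper simply asserts the isomorphism $L'\cong h_T^*L$ of VMHS and organizes the comparison into a commutative square, whereas you supply an explicit justification via Galois descent; this is the only substantive difference.

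One small simplification: you do not need full Galois descent in $\mathrm{MHM}$. It suffices to carry out the descent at the level of perverse sheaves (where it is standard) to conclude $(\sP_{X'})_s=h_T^*(\sP_X)_s$; then, since $(\sM_X)_s$ is already a sub-MHM of $\sM_X$ by Proposition~\ref{prop_subMHM} and $\rat$ is faithful and exact, the equality $h_T^*(\sM_X)_s=(\sM_{X'})_s$ of sub-MHM follows automatically from the underlying perverse-sheaf equality. This avoids invoking the $G$-equivariant equivalence for $\mathrm{MHM}$.
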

\begin{proof}
First, note that $\Q[\pi_1(T)]$ is a finitely generated $\Q[\pi_1(T')]$-module. {Thus, the definition of $\Art H^i(X, \sL_X)$ does not depend on whether  it is considered as a $\Q[\pi_1(T)]$-module or a $\Q[\pi_1(T')]$-module.}

Consider the isomorphisms in \eqref{eq:Alex-Stalk} applied both to $(X,f)$ and $(X',f')$. We use $\Mel_T$ and $\Mel_{T'}$ to denote the Mellin transform on the tori $T$ and $T'$ respectively. Let $L=\pH^i(Rf_*\underline \Q_{X})_s[-n]$ and $L' = \pH^i(Rf'_*\underline \Q_{X'})_s[-n]$.
The isomorphisms \eqref{eq:Alex-Stalk} form the rows of the following diagram. 
\[
\begin{tikzcd}[row sep = 1.2em]
\Art H^i(X',\sL_{X'}) \arrow[r,leftrightarrow] \arrow[d,leftrightarrow,"(\ref{eq_2line})"]&
\Mel_{T'}(L') \arrow[r,leftrightarrow] \arrow[d,leftrightarrow,"L'\cong h_T^*L"]&
L'_{\wt b}\arrow[r,leftrightarrow]\arrow[d,leftrightarrow,"L'\cong h_T^*L"]&
 L'_{\pi'(\tilde b)}\arrow[d,leftrightarrow,"L'\cong h_T^*L"]\\
\Art H^i(X,\sL_{X}) \arrow[r,leftrightarrow] &
\Mel_{T}(L) \arrow[r,leftrightarrow] &
L_{\wt b}\arrow[r,leftrightarrow]&
 L_{\pi(\tilde b)} 
\end{tikzcd} 
\]
Since $h_T: T'\to T$ is a covering map, we get an isomorphism of VMHS, $L'\cong h_T^*L$, 
which together with the projection formula yields that
$$
\Mel_{T'}(L') {\cong} R\Gamma(T', h_T^*L \otimes \sL_{T'})  \cong 
R\Gamma(T, {h_T}_*(h_T^*L \otimes  \sL_{T'}))
\cong R\Gamma(T, L \otimes {h_T}_*\sL_{T'}) \cong \Mel_{T}(L).
$$
Here, the last isomorphism uses ${h_T}_*\sL_{T'} \cong \sL_T$, as in \eqref{eq_2line}.
By applying $\pi'^*$ and the stalk to the isomorphism $L'\cong h_T^*L$ we get the third vertical isomorphism  in the diagram. The last vertical isomorphism comes simply from taking stalks; this is a MHS isomorphism since  $L'\cong h_T^*L$ is a VMHS isomorphism. The assertion in the lemma then follows from the fact that the diagram commutes, which is a straightforward verification.
\end{proof}

\begin{proof}[Proof of Properties (2-5) of Theorem \ref{thm1}]
Property (3) follows from the naturality of our construction. More specifically, the isomorphism {(\ref{eq_5}) can be seen as a natural transformation in the variable $\mathcal P$, and we can apply it to the adjunction MHM morphism $\pH^i(Rg_*\ul \Q_Y)\to \pH^i(Rg_*R\phi_*\ul\Q_X)\cong \pH^i(Rf_*\ul \Q_X)$}.

Essentially, properties (2), (4) and (5) are consequences of the work of Hain-Zucker \cite{HZ1} and \cite{HZ2}. {Applying Lemma~\ref{lem:finiteCover} to a suitable finite cover of $T$, we can assume that the action of $\pi_1(T)$ on $\Art H^i(X, \sL_X)$ is unipotent, or equivalently, the VMHS $\pH^i(Rf_*\ul\Q_X)_{s}[-n]$ has unipotent monodromy.} 

Property (2) and the second part of property (5) follow from the fact that for a unipotent admissible VMHS on $T$, the MHS on different points of $T$ are non-canonically isomorphic to each other. This is reviewed in Remark \ref{remark_HZ}. Property (4) follows immediately from Theorem \ref{thm_HZ}. Finally, we prove the first part of property (5). If the underlying local system of a VMHS $\sV$ on $T$ is semisimple, and hence trivial, then the associated mixed Hodge representation is equal to the trivial representation on a given MHS. Thus, both the VMHS $\sV$ and its pullback $\pi^*(\sV)$ are trivial families of MHS. Therefore, for any two choices $\tilde b,\tilde b'\in \wt T$, there is a natural isomorphism between the MHS $\pi^*(\sV)_{\tilde b}$ and $\pi^*(\sV)_{\tilde b'}$. 
\end{proof}

\begin{proof}[Proof of Proposition \ref{ss}]
By the decomposition theorem of \cite{BBD}, there is a decomposition
$$Rf_*\underline{\Q}_X \simeq \bigoplus_{\lambda \in \Lambda} \sP_\lambda[d_\lambda],$$
where $\Lambda$ is a finite index set, $d_\lambda \in \Z$, and each $\sP_\lambda$ is a simple $\Q$-perverse sheaf on $T$.

In view of our description of $\Art H^i(X, \sL_X)$, it suffices to show that if $\sP$ is a smooth $\Q$-perverse sheaf, then $H^0(T, \sP \otimes \sL_T)$ is a simple $A$-module. Since $\sP$ is smooth, there is a $\Q$-local system $L$ on $T$ so that $\sP \cong L[n]$. Moreover, since $\sP$ is simple, the local system $L$ is simple (that is, the corresponding $\pi_1(T,b)$-representation is simple). In particular, the stalk $L_b \cong L_{\tilde{b}}$ is a simple $A$-module. Finally, using Lemma \ref{lemma_ls}, we get
$$H^0(T, \sP \otimes \sL_T)=\Mel(\sP) \cong L_{\tilde{b}},$$
which concludes our proof.
\end{proof}

\begin{proof}[Proof of Corollary \ref{van}]
Statements (a), (b) and (d) follow immediately from Theorem \ref{thm1} (2). The weight filtration $H^{i-n}(F_b, \Q)$ is only nontrivial between degree $\max\{0, 2i-2n-2d\}$ and $\min\{2i-2n, 2d\}$. Moreover, when $F_b$ is smooth, then the weight filtration $H^{i-n}(F_b, \Q)$ is only nontrivial between degree $ i-n$ and $\min\{2i-2n, 2d\}$.  Thus, statement (c) follows from Theorem  \ref{thm1} (4). This is the same idea of proof as \cite[Corollary 7.4.2]{EGHMW}, which only uses the bound on the weights and the analogous statement to Theorem~\ref{thm1} (4).
\end{proof}

\begin{proof}[Proof of Corollary \ref{iso}]
If $f:X \to T$ is a topologically locally trivial fibration, the $\Q$-constructible complex $Rf_*(\underline{\Q}_X)$ is locally constant on $T$, and hence $R^kf_*(\underline{\Q}_X)$ are local systems on $T$, for all $k \in \Z$. Moreover, it follows from \cite[Section 4]{LMW20} that 
the perverse cohomology sheaves 
$^p\sH^i\big(Rf_*\underline{\Q}_X\big)$ ($i \in \Z$) are smooth, with
$$^p\sH^i\big(Rf_*\underline{\Q}_X\big)\cong (R^{i-n}f_*\underline{\Q}_X)[n].$$
The assertion follows now by tracing the construction of the map $\iota$ in Theorem \ref{thm1}(1).
\end{proof}





\section{Behavior of Alexander modules after removing fibers}\label{sec:remove}

In this section, we prove Proposition~\ref{prop:removeFibers} and Corollary~\ref{cor:homologyOpen}. 

Let $X$ be an algebraic variety, endowed with an algebraic map $f\colon X \to T$. Let $Z$ be a proper closed subset of $T$, with complement $U=T\setminus Z$, and let $Y=f^{-1}(U)$. Consider the following commutative diagram, where the horizontal arrows are open and closed embeddings, and the vertical arrows are restrictions of $f$:
\begin{equation}\label{eq:openClosed}
\begin{tikzcd}[row sep = 1.5em]
Y\arrow[r,hookrightarrow,"\tilde{\jmath}"]\arrow[d,"f_Y"] &
X\arrow[d,"f"] &
f^{-1}(Z) \arrow[l,hookrightarrow,"\tilde{\imath}"']\arrow[d,"f_Z"]
\\
U \arrow[r,hookrightarrow,"j"] &
T &
Z \arrow[l,hookrightarrow,"i"']
\end{tikzcd}
\end{equation}

\begin{lemma}\label{lem:smoothPerverse}
Let $\sF$ be a $\Q$-local system on $T$ and $\sG$ be a perverse sheaf supported on a proper closed set $Z$ of $T$. Then:
\[
\Hom_{\Perv(T)}(\sF[n],\sG) =
\Hom_{\Perv(T)}(\sG,\sF[n]) =
0.
\]
\end{lemma}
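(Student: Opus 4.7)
The plan is to transport both $\Hom$ computations to the subvariety $Z$ via the adjunctions for the closed embedding $i\colon Z \hookrightarrow T$, and then conclude from the perverse $t$-structure on $Z$ together with the dimension inequality $\dim Z < n$.

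First, since $i$ is a closed immersion, $i_*$ is $t$-exact for the perverse $t$-structures, and every perverse sheaf on $T$ supported on $Z$ is of the form $i_*\sG_0$ for a unique $\sG_0 \in \Perv(Z)$. The adjoint pairs $(i^*, i_*)$ and $(i_*, i^!)$, together with the full faithfulness of $\Perv(T) \hookrightarrow D^b_c(T)$, would then give the identifications
\[
\Hom_{\Perv(T)}(\sF[n], \sG) \cong \Hom_{D^b_c(Z)}(i^*\sF[n], \sG_0),
\]
\[
\Hom_{\Perv(T)}(\sG, \sF[n]) \cong \Hom_{D^b_c(Z)}(\sG_0, i^!\sF[n]).
\]

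The crux will be to show that $i^*\sF[n]$ and $i^!\sF[n]$ lie, respectively, in strictly negative and strictly positive perverse degrees on $Z$. For the first, note that $i^*\sF[n] = \sF|_Z[n]$ is a single locally constant sheaf in cohomological degree $-n$ whose support has dimension at most $n-1$ (since $Z$ is a proper closed subvariety of the irreducible $n$-dimensional variety $T$); the support condition of the perverse $t$-structure on $Z$ then places $i^*\sF[n]$ in ${}^p D^{\leq -1}(Z)$. Applying the same dimension count to the smooth perverse sheaf $D_T(\sF[n])$ and combining with the identity $i^! = D_Z \circ i^* \circ D_T$, Verdier duality exchanges the two $t$-structure bounds and gives $i^!\sF[n] \in {}^p D^{\geq 1}(Z)$.

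Finally, since $\sG_0 \in \Perv(Z) = {}^p D^{\leq 0}(Z) \cap {}^p D^{\geq 0}(Z)$, the orthogonality axiom of the perverse $t$-structure forces both right-hand sides above to vanish. The argument is essentially bookkeeping with perversity bounds; the only delicate point I expect is a careful use of the strict inequality $\dim Z < n$, which is precisely what makes the perverse degrees of $i^*\sF[n]$ and $i^!\sF[n]$ land on the correct side of zero so that the Hom-orthogonality applies.
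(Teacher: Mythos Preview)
Your proof is correct. The approach differs slightly from the paper's, though both hinge on the strict inequality $\dim Z < n$ and both invoke Verdier duality for the second vanishing. The paper stays on $T$ and argues entirely in the \emph{standard} $t$-structure: since $i^*\sG$ is perverse on $Z$ and $\dim Z \leq n-1$, the complex $\sG \cong i_*i^*\sG$ has ordinary cohomology sheaves concentrated in degrees $[-n+1,0]$, whereas $\sF[n]$ sits in degree $-n$; there are no nonzero maps from a complex in degrees $\leq -n$ to one in degrees $\geq -n+1$, and Verdier duality handles the reverse direction. You instead transport to $Z$ via the adjunctions $(i^*,i_*)$ and $(i_*,i^!)$ and use the \emph{perverse} $t$-structure orthogonality on $Z$, showing $i^*\sF[n]\in {}^pD^{\leq -1}(Z)$ and $i^!\sF[n]\in {}^pD^{\geq 1}(Z)$. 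The paper's route is a touch more elementary (no perverse machinery on $Z$ beyond the amplitude bound), while yours is cleaner from a categorical standpoint and makes the symmetry between the two vanishings more transparent.
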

\begin{proof}
First recall that perverse sheaves on a space of complex dimension $d$ are supported in cohomological degrees $[-d,0]$ (see, e.g., \cite[Exercise 8.3.5]{M19}). Let $i\colon Z=\operatorname{supp} \sG\to T$ be the closed inclusion. By the attaching triangle, we get $\sG\cong i_*i^*\sG$. Since $i^*\sG$ is perverse on $Z$ (see, e.g., \cite[Corollary 8.2.10]{M19}) and $\dim Z<n$, it follows that $i^*\sG$ is supported in cohomological degrees $[-n+1,0]$. By the exactness of $i_*$, the same is true for $\sG$. Since $\sF[n]$ is supported on cohomological degree $-n$, and since there are no nonzero morphisms from a complex in degrees at most $-n$ to a complex supported in degrees at least $-n+1$, we get that $\Hom_{\Perv(T)}(\sF[n],\sG) =0$. Then, by Verdier duality, $\Hom_{\Perv(T)}(\sG,\sF[n]) =0$.
\end{proof}

\begin{proof}[Proof of Proposition~\ref{prop:removeFibers}]
We use the notation in diagram (\ref{eq:openClosed}).
\begin{enumerate}[wide,
 labelwidth=!,
  ]
\item By Proposition~\ref{prop_iso1} and Corollary~\ref{cor_GL}, for any $k$:
\begin{equation}\label{eq:Y-Mellin}
\begin{split}
H^k(Y,\sL_{Y}) & 
\overset{\text{\ref{prop_iso1}}}{\cong} H^k(T,R(j\circ f_Y)_*\underline{\Q}_{Y}\otimes \sL_T)  \\
& \overset{\text{\ref{cor_GL}}}{\cong}
 \Mel(\pH^k(R(j\circ f_Y)_*\underline{\Q}_{Y})).  
\end{split}
\end{equation}
Similarly, $H^k(X,\sL_X)\cong \Mel(\pH^k(Rf_*\underline{\Q}_X))$. Consider the attaching triangle associated to $Rf_*\underline{\Q}_X$, for the embeddings $i$ and $j$:
\begin{equation}\label{atr}
i_!i^!Rf_*\underline{\Q}_X\to Rf_*\underline{\Q}_X \to Rj_*j^* Rf_*\underline{\Q}_X.
\end{equation}
Since  $j^*\circ Rf_*\cong R(f_Y)_*\circ \tilde\jmath^*$ (e.g., use \cite[Proposition 10.7(4)]{B08} and the fact that $j$ and $\tilde\jmath$ are open inclusions), we have:
\begin{equation}\label{eq22}
Rj_* j^* Rf_*\underline{\Q}_X\cong
Rj_*  R(f_Y)_*\tilde\jmath^*\underline{\Q}_X\cong
R(j\circ f_Y)_*\underline{\Q}_Y.
\end{equation}
Using (\ref{eq:Y-Mellin}), this complex computes (via the Mellin transformation) the Alexander modules of $Y$. Moreover, since $i_*\cong i_!$ is $t$-exact, we also have that 
\begin{equation}\label{eq23}
\pH^k(i_*i^!Rf_*\underline{\Q}_X) \cong i_*\pH^k(i^!Rf_*\underline{\Q}_X).\end{equation} Taking perverse cohomology of the attaching triangle \eqref{atr}, and using \eqref{eq22} and \eqref{eq23}, we obtain a long exact sequence:
\begin{equation}\label{eq:lesPerv}
 i_* \pH^k(i^!Rf_*\underline{\Q}_X)\to  \pH^k(Rf_*\underline{\Q}_X) \xrightarrow{\phi} \pH^k(R(j\circ f_Y)_*\underline{\Q}_{Y}) \to  i_* \pH^{k+1}(i^!Rf_*\underline{\Q}_X) .
\end{equation}
Denote the maximal smooth sub-object of $\pH^k(Rf_*\underline{\Q}_X)$ and $\pH^k(R(j\circ f_Y)_*\underline{\Q}_{Y})$ by $\sM_s^k$ and $\sN_s^k$ respectively. Since $\ker{\phi}$ is supported on $Z$ and $\sM_s^k$ is a (shifted) local system, by Lemma~\ref{lem:smoothPerverse}, we have that $(\ker{\phi})\cap \sM_s^k = 0$.
%
%
Therefore, $\sM_s^k$ is a sub-object (via $\phi$) of $\pH^k(R(j\circ f_Y)_*\underline{\Q}_{Y})$. By maximality of $\sN_s^k$, we have $
\sM_s^k\subseteq \sN_s^k$, so (1) follows by applying the Mellin transformation and Proposition~\ref{prop_s0}.

\item Assume $f$ is a locally trivial fibration in a neighborhood $B$ of $Z$. In this case, $Rf_*\underline{\Q}_X$ is a complex whose cohomology sheaves are local systems on $B$. In particular, $\pH^k(Rf_*\underline{\Q}_X)$ is smooth on $B$ for all $k$ (see \cite[Proposition 4.3]{LMW20}).
Then, we can use Lemma~\ref{lem:smoothPerverse} again to conclude that the first map in (\ref{eq:lesPerv}) vanishes, yielding the short exact sequence:
\begin{equation}\label{eq:sesPerv}
0\to   \pH^k(Rf_*\underline{\Q}_X) \xrightarrow{\phi} \pH^k(R(j\circ f_Y)_*\underline{\Q}_{Y}) \to i_* \pH^{k+1}(i^!Rf_*\underline{\Q}_X) \to 0.
\end{equation}
Apply the exact functor $\Mel$ to \eqref{eq:sesPerv}, to obtain:
\[
0\to   \Mel(\pH^k(Rf_*\underline{\Q}_X)) \xrightarrow{\Mel(\phi)} \Mel(\pH^k(R(j\circ f_Y)_*\underline{\Q}_{Y})) \to \Mel(i_* \pH^{k+1}(i^!Rf_*\underline{\Q}_X)) \to 0.
\]
By Proposition~\ref{prop_s0}, the last term in this short exact sequence does not contain nonzero Artinian submodules. Therefore, the maximal Artinian submodules of the first two modules coincide.

\item Let us now assume that $Z=\{x\}$ is contained in a Zariski open subset $B$ over which $f$ is a fibration. Let us first compute $i^!Rf_*\underline{\Q}_X$. Factor $i$ as the composition
$$i:\{x\} \overset{i_x}{\hookrightarrow} B \overset{j_B}{\hookrightarrow} T.$$
Let $f_B\colon X_B=f^{-1}(B) \to B$ be the restriction of $f$ over $B$.
Then 
\begin{equation}\label{eq31} i^!Rf_*\underline{\Q}_X=(j_B \circ i_x)^! Rf_*\underline{\Q}_X \cong i_x^!j_B^!Rf_*\underline{\Q}_X\cong  i_x^!R(f_B)_*\underline{\Q}_{X_B}.\end{equation}
Since $f$ is a fibration over $B$, the complex $j_B^*Rf_*\underline{\Q}_X\cong R(f_B)_*\underline{\Q}_{X_B}$  is a locally constant complex on $B$, in the sense of \cite[Proposition 4.3]{LMW20}. It then follows from \cite[Proposition 3.7(b)]{B08} that 
\begin{equation}\label{eq32} i_x^!R(f_B)_*\underline{\Q}_{X_B}\cong i_x^*R(f_B)_*\underline{\Q}_{X_B}[-2n].\end{equation}

Using the fact that on a point space we have $\pH^k = \sH^k$, we get from \eqref{eq31} and \eqref{eq32} that:
\[
\pH^k(i^!Rf_*\underline{\Q}_X) \cong \sH^k(i^!Rf_*\underline{\Q}_X) \cong \sH^{k-2n}(R(f_B)_*\underline{\Q}_{X_B})|_x \cong H^{k-2n}(F_x,\Q).
\]
Now (\ref{eq:sesPerv}) becomes:
\[
0\to   \pH^k(Rf_*\underline{\Q}_X) \xrightarrow{\phi} \pH^k(R(j\circ f_Y)_*\underline{\Q}_{Y}) \to i_*H^{k-2n+1}(F_x,\Q) \to 0.
\]
Apply the exact functor $\Mel$, to obtain:
\[
0\to   \Mel(\pH^k(Rf_*\underline{\Q}_X)) \xrightarrow{\Mel(\phi)} \Mel(\pH^k(R(j\circ f_Y)_*\underline{\Q}_{Y})) \to \Mel(i_*H^{k-2n+1}(F_x,\Q)) \to 0.
\]
The last term in this short exact sequence is the free $A$-module $A\otimes_\Q H^{k-2n+1}(F_x,\Q)$. In particular, the sequence splits.

\item Let $f_B\colon X\to B$ be the restriction of the codomain of $f$, let $j_B$ be the open embedding $B\to T$. Let $x\in B$ and $F = f^{-1}(x)$. Since $f_B$ is a fibration,
\[
\pH^{k}(R(f_B)_*\underline{\Q}_X) \cong (R^{k-n}(f_B)_*\underline{\Q}_X)[n]
\]
is a shift of the local system with stalk $H^{k-n}(F,\Q)$ and monodromy induced by the monodromy acting on $F$. Consider $Rf_*\underline{\Q}_X \cong R(j_B)_*R(f_B)_*\underline{\Q}_X$. Since $B$ is a hypersurface complement, $R(j_B)_*$ is $t$-exact, and hence:
\[
\pH^k(R(j_B)_*R(f_B)_*\underline{\Q}_X)\cong 
R(j_B)_*\pH^k(R(f_B)_*\underline{\Q}_X) \cong 
R(j_B)_*(R^{k-n}(f_B)_*\underline{\Q}_X)[n].
\]
Note that $(j_B)^*$ (with a shift) induces an injection from the set of smooth sub-objects of $\pH^k(R(j_B)_*R(f_B)_*\underline{\Q}_X)$ and the set of local systems contained in $R^{k-n}(f_B)_*\underline{\Q}_X$ (its partial inverse is $(j_B)_{!*}$), which in turn injects into the set of subspaces of the stalk $R^{k-n}(f_B)_*\underline{\Q}_X|_x$. These injections preserve containments, so to find the maximal smooth sub-object $\sM_s^k$ of $\pH^k(R(j_B)_*R(f_B)_*\underline{\Q}_X)$ it is enough to find its stalk at $x$.

Using the stalk at $x$, we can think of local systems on $B$ (resp. on $T$) as $\pi_1(B,x)$-representations (resp. $\pi_1(T,x)$-representations). The functor $j_B^*$ is the pullback of the representation along $\eta\colon\pi_1(B)\twoheadrightarrow \pi_1(T)$. Therefore, a local system on $B$ comes from $T$ if and only $\ker \eta$ acts trivially, i.e. the largest sub-local system of $R(j_B)_*(R^{k-n}(f_B)_*\underline{\Q}_X)$ has stalk $\overline{H}{}^{k-n}$.

Finally, using Proposition~\ref{prop_s0}, the maximal Artinian submodule of $H^k(X,\sL_X)$ is $\Mel(\sM_s^k)$. The result follows from applying Lemma~\ref{lemma_ls}.\qedhere
\end{enumerate}
\end{proof}

\begin{proof}[Proof of Corollary~\ref{cor:homologyOpen}]
If $n=1$,  then $A\cong\Q[t^{\pm 1}]$ is a principal ideal domain, hence every finitely generated $A$-module (e.g., the Alexander modules appearing in this proof) decomposes into a direct sum of its free part and its torsion (maximal Artinian submodule) part.

Parts (1) and (2) follow by applying Proposition~\ref{prop:removeFibers}, parts (1), (2) and (3), and from the functoriality of the isomorphism in \cite[Proposition 2.4.1]{EGHMW}. Note that by the Universal Coefficients Theorem, the modules $H^i(Y,\sL_Y)$ and $H_i(Y,\sL_Y)$ have the same rank as $A$-modules, and similarly for $X$.

Part (3) follows from Proposition~\ref{prop:removeFibers} (4): we have that $\Tors_A H^{i+1}(X,\sL_X)$ is the largest submodule of $H^{i}(F,\Q)$ fixed by $K$. Using \cite[Proposition 2.4.1]{EGHMW}, $\Tors_A H_{i}(X,\sL_X)$ is the $\Q$-dual of $\Tors_A H^{i+1}(X,\sL_X)$ (and the isomorphism is compatible with the map to $H^i(F,\Q)$ and the monodromy action). Therefore, $\Tors_A H_{i}(X,\sL_X)$ is the largest quotient of $H_{i}(F,\Q)$ fixed by $K$.
\end{proof}

\section{A non-semisimple example}\label{sec:non-ss}
In this section, we construct a map $f: X\to \C^*$, where $X$ is a singular quasi-projective variety, such that the action of $\pi_1(\C^*)$ on $\Art H^2(X, \sL_X)$ is not semisimple. 

Let $B=\C^*\setminus \{1\}$. $X$ will be a family of nodal curves over $B$. Concretely, over $s\in B$, the fiber over $s$ will be $\P^1\setminus \{1,s\}$ with the points $0$ and $\infty$ identified. This example and its resulting variation of MHS was originally considered by Deligne in \cite[Section 13]{Deligne}.

Let $x,y$ be coordinates on $\C^2$ and $s$ be the coordinate on $B$. We define $X$ by
\[
X \coloneqq \{(x,y,s)\mid (sx-y)(y-x)+(s-1)^2x^2y=0\}\subset \C^2\times B,
\]
and $f:X\rightarrow \C^*$ is given by the projection onto the last coordinate.

Each non-empty fiber of $f$ (any fiber over $s\neq 1$) is a nodal cubic (in $\P^2$) with 2 smooth points removed (both of which lie on the line at infinity) 
If we use $\lambda$ for the coordinate of $\P^1$, $X$ is parametrized by:
\[
\begin{array}{crcl}
\Phi\colon & \P^1\times B\setminus (\{\lambda=1\}\cup \{\lambda=s\}) & \longrightarrow &X\subset \C^2\times B\\
& (\lambda,s) & \longmapsto &
\left(
\frac{\lambda}{(\lambda-1)(\lambda-s)},\frac{\lambda}{(\lambda-1)^2},s
\right)
\end{array}
\]
We use the affine coordinate $\lambda$, but $\Phi$ has the algebraic extension to $\lambda=\infty$, by letting $\Phi(\infty,s) = (0,0,s)$. It is straightforward to verify that the image of $\Phi$ is $X$, and that $\Phi$ induces a homeomorphism:
\[
\frac{\P^1\times B}{(0,s)\sim (\infty,s)} \cong X.
\]

%

We claim that $S_0 H^2(X, \sL_X)$ is not semisimple. Based on the description above, $f$ is a locally trivial fibration over $B$, so we can apply Proposition~\ref{prop:removeFibers} (4): if $F$ is a generic fiber of $f$, $S_0H^2(X,\sL_X)$ is the subspace of $H^1(F,\Q)$ fixed by the kernel of $\pi_1(B)\to \pi_1(\C^*)$.

From the topological description of the fibers, we can compute the monodromy action on a basis of $H^1(F,\Q)\cong \Q^2$, or, equivalently, of $H_1(F,\Q)$. One such basis is shown in Figure~\ref{fig:fiber}. As $s\in B$ varies, the fiber $f^{-1}(s)$ varies in that one puncture moves to $\lambda=s$ and the other stays in place at $\lambda=1$. Let $\gamma_0,\gamma_1\in \pi_1(B)$ be loops going around the origin and $s=1$, respectively. Each of these induces a monodromy homeomorphism of $F$, namely the ones seen in Figures~\ref{fig:mon0} and~\ref{fig:mon1}. We can see that they induce the following automorphisms of $H_1(F,\Q)$.
\begin{align*}
\gamma_0&\colon a \mapsto a; &
\gamma_1&\colon a \mapsto a; \\
\gamma_0&\colon b \mapsto b-a ;&
\gamma_1&\colon b \mapsto b .
\end{align*}

\begin{figure}[h!]
\centering
\begin{minipage}{.33\textwidth}
  \centering
  \includegraphics[width=.95\linewidth]{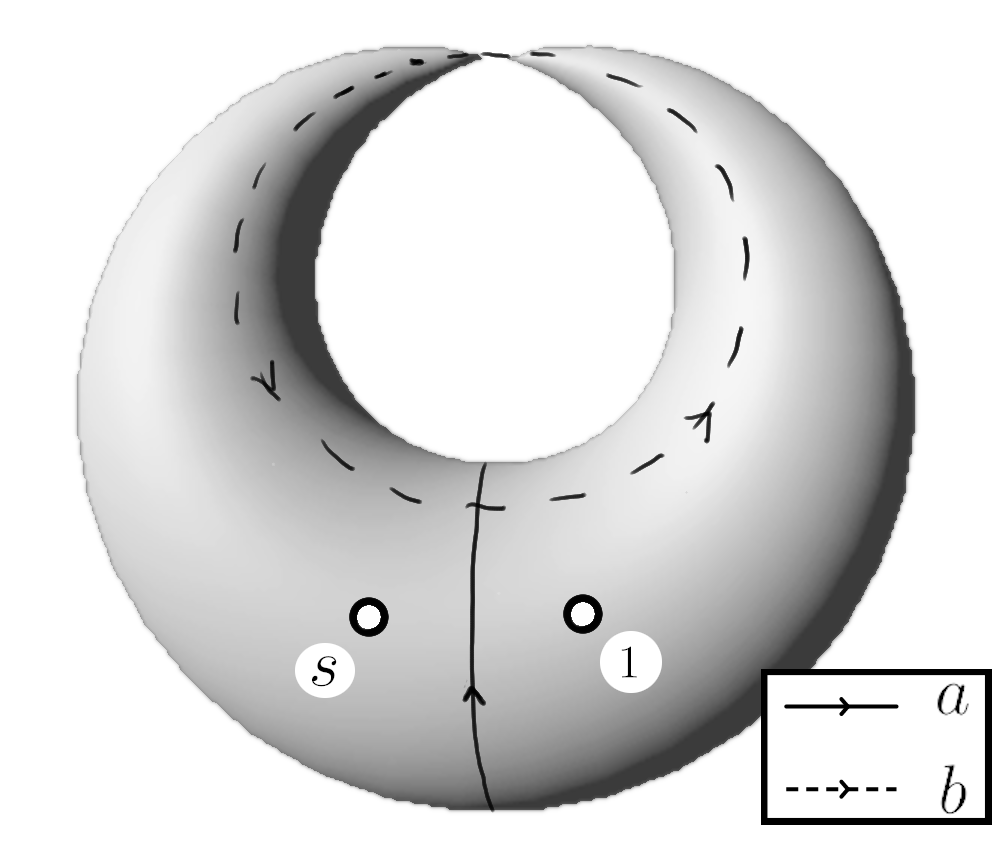}
  \captionof{figure}{The fiber of $f$.}\label{fig:fiber}
\end{minipage}%
\begin{minipage}{.33\textwidth}
  \centering
  \includegraphics[width=.95\linewidth]{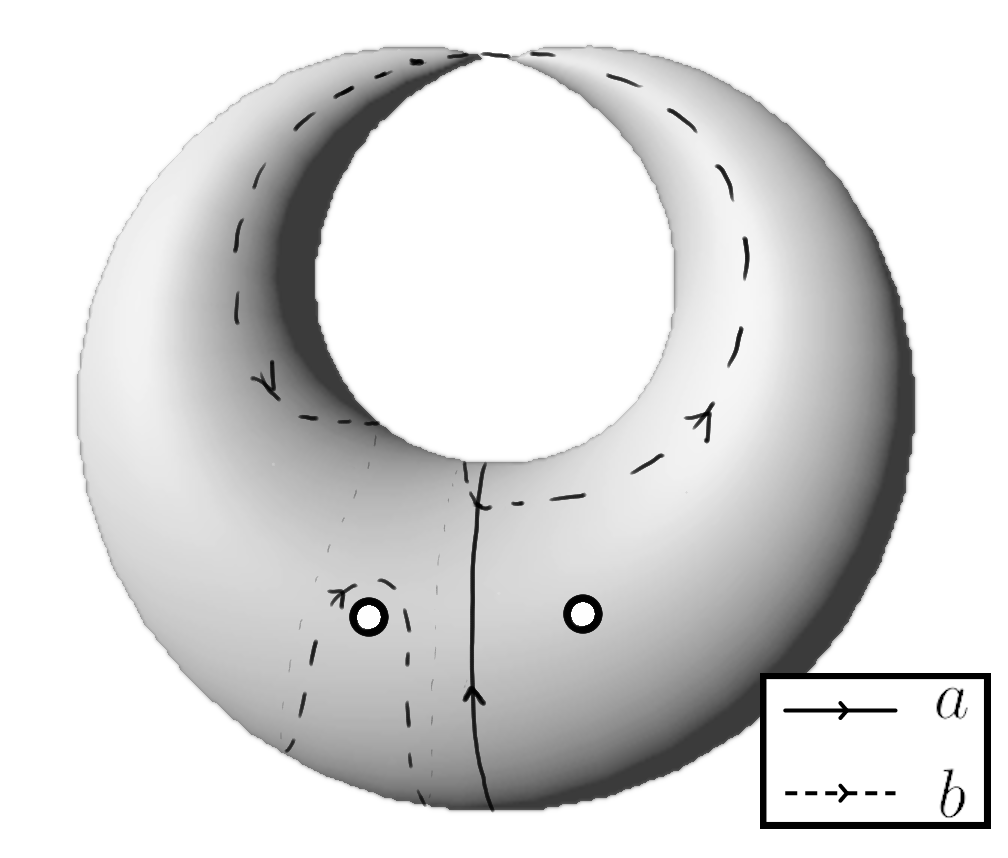}
  \captionof{figure}{The action of $\gamma_0$}\label{fig:mon0}
\end{minipage}%
\begin{minipage}{.33\textwidth}
  \centering
  \includegraphics[width=.95\linewidth]{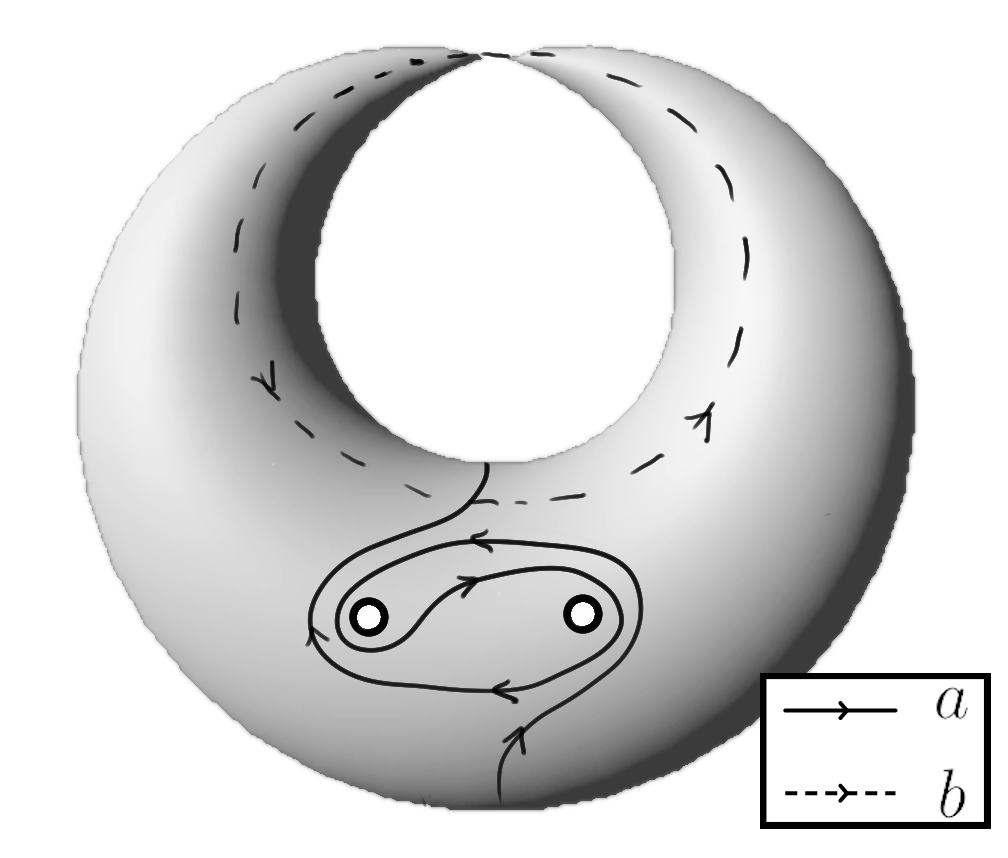}
  \captionof{figure}{The action of $\gamma_1$}\label{fig:mon1}
\end{minipage}
\end{figure}

Since $\gamma_1$ induces the identity, Proposition~\ref{prop:removeFibers} (4) tells us that $S_0H^2(X, \sL_X) = H^1(F,\Q)$, a 2 dimensional space. The module structure is induced by letting $t\in \Q[t^{\pm 1}]$ act as the monodromy of $\gamma_0$, which is a unipotent, but not semisimple, automorphism.

\begin{remark}
As pointed out in \cite{EGHMW}, we are not aware of non-semisimple examples of Alexander modules arising from an algebraic map $f:X \to \C^*$, with $X$ a smooth variety. However, A. Libgober \cite{Li21} has recently shown that such non-semisimple examples can be constructed if $f$ is replaced by a continuous map to $S^1$.
\end{remark}


\end{document}